\newenvironment{proof}[1][Proof]{\noindent\textit{#1.} }{\hfill  \rule{0.5em}{0.5em}}
\newtheorem{theorem}{Theorem}
\newtheorem{lm}{Lemma}
\newtheorem{thm}[theorem]{Theorem}
\newtheorem{pro}{Proposition}
\newtheorem{exmp}{Example}
\newtheorem{coro}{Corollary}
\begin{document}

\title{An equation in nonlinear combination of iterates
}

\author{
Chaitanya Gopalakrishna\,$^a$,~~
Weinian Zhang\,$^b$
\vspace{3mm}\\
$^a${\small Theoretical Statistics and Mathematics Unit, Indian Statistical Institute,}
\\
{\small  R.V. College Post, Bengaluru-560059, India}
\\
$^b${\small School of Mathematics, Sichuan University,}
\\
{\small Chengdu, Sichuan 610064, P. R. China}
    \vspace{0.2cm}\\
    {\small cberbalaje@gmail.com (CG),~~matzwn@126.com (WZ).}}

\date{}


\maketitle

\begin{abstract}
In this paper we deal with an equation in nonlinear combination of iterates.
Although it can be reduced by the logarithm conjugacy to a form for application of Schauder's or Banach's fixed point theorems,
a difficulty called {\it Zero Problem} is encountered for continuous solutions
because the domain does not contain $0$.
So we consider solutions with weaker regularity, using the Knaster-Tarski fixed point theorem for complete lattices to give order-preserving solutions. Then we give semi-continuous solutions and integrable solutions.

\vskip 0.2cm

{\bf Keywords:}
Functional equation; iteration; complete lattice; order-preserving map; Knaster-Tarski fixed point theorem.
\vskip 0.2cm

{\bf MSC(2010):}
primary 39B12; secondary 47J05; 06F20.
\end{abstract}



\baselineskip 16pt
\parskip 10pt


\section{Introduction}

Iteration, the most popular action in the contemporary era because of computers (\cite{Ian-Kinder, Kinder, Martin2002, Wynn}),
can be understood as
repetition of the same operation.
In mathematics,
for integer $n\ge 0$, the $n$-th order iterate $g^n$ of
a self-map $g: E\to E$ on a non-empty set $E$ is defined recursively by
$$
g^n = g\circ g^{n-1},\ \ \  g^0={\rm id},
$$
where $\circ$ denotes the composition of functions and ${\rm id}$
 the identity map.
Great attentions (\cite{Bar-Jar, Kucz68,Kucz90}) have been paid to
functional equations involving iteration, called {\it iterative equations}.
The general form of such equations can be presented as
\begin{eqnarray}
\Phi(x, g(x), g^2(x),\ldots, g^n(x))= 0,
\label{Eq*}
\end{eqnarray}
where $\Phi$ is a given function and $g$ is unknown.

There have been published many papers on equation (\ref{Eq*}) with specific $\Phi$.
The simplest one is $g^n = G$, i.e., $\Phi(x_0,\ldots,x_n)=x_n-G(x_0)$, where $G$ is given,
which is called the iterative root problem (\cite{Kucz68,Kucz90,Targonski}).
As shown in \cite{Kucz90},
the problem of invariant curve $y = g(x)$ for the planar mapping $(x,y)\to (y, F(x,y))$
can be reduced to the equation $F(x, g(x)) = g^2(x)$, which is equation (\ref{Eq*}) with
$\Phi(x_0,x_1) = F(x_0,x_1)- x_2$.
When $\Phi$ is of the linear combination form
$\Phi(x_0,\ldots,x_n)= 
\sum_{k=1}^{n}\lambda_kx_k- G(x_0)$,
equation (\ref{Eq*}) becomes the following
\begin{eqnarray}
\sum_{k=1}^{n}\lambda_kg^k(x)=G(x),
\label{Poly-Eq}
\end{eqnarray}
called the polynomial-like iterative equation, which was investigated in various aspects
such as continuous solutions (\cite{Jar96,Kul-Tab,Tabor,Xu-Zhang}), differentiable solutions (\cite{zhang1990}),
convex solutions
and decreasing solutions (\cite{Bing-Weinian}), and equivariant solutions (\cite{Zhang-Edinb}).

More difficulties come from the nonlinear case of $\Phi$.
In \cite{Mu-Su,Si,Wang-Si}
 the function $\Phi$ is nonlinear but Lipschitzian, which makes $\Phi$ being bounded by a linear combination
and therefore the method for equation (\ref{Poly-Eq}) is available.
Another discussion (\cite{Zdun-Zhang}) on nonlinear $\Phi$ was made on the unit circle $\mathbb{T}^1$, which was
solved by lifting to maps on a compact interval.
Thus, more attentions are paid to the forms which are not Lipschtzian, for example,
$
 \Phi(x_0,x_1,x_2):= \alpha_1x_1^{\lambda_1}x_2^{\mu_1} +\alpha_2x_1^{\lambda_2}x_2^{\mu_2}.
$
In \cite{Gop-Vee} the equation
\begin{eqnarray}
\prod_{k=1}^{n}(g^k(x))^{\lambda_k}
 = G(x),
\label{ggg}
\end{eqnarray}
the equation (\ref{Eq*}) with multiplication, i.e.,
$\Phi(x_0,\ldots, x_n) =\prod_{k=1}^n x_k^{\lambda_k}-G(x_0)$, is discussed,
where $G$ is given, $\lambda_k$\,s are real constants
and $g$ is unknown.
This equation can be reduced with a logarithm conjugation to the standard form of the
polynomial-like iterative equation (\ref{Poly-Eq}) on the whole $\mathbb{R}$,
but those known results about (\ref{Poly-Eq}) were obtained on a compact interval or a neighborhood of
 a fixed point.
The authors used the Banach contraction principle to give the existence, uniqueness,
and continuous dependence of continuous solutions on $\mathbb{R}_+:=(0,\infty)$ that are Lipschitzian on their ranges and constructed its
continuous solutions on $\mathbb{R}_+$ sewing piece by piece. Then they technically extended the
results on $\mathbb{R}_+$ to $\mathbb{R}_-:=(-\infty,0)$ and showed that none of the pairs of solutions obtained on $\mathbb{R}_+$
and $\mathbb{R}_-$ can be combined at the origin to get a continuous solution of the equation
on the whole $\mathbb{R}$, but can extend those given on $\mathbb{R}_+$ to obtain continuous solutions
on the whole $\mathbb{R}$.
A discussion on differentiable solutions of \eqref{ggg} on $\mathbb{R}_+$ and their extension to $\mathbb{R}_-$ was made in \cite{Gopal2023}.

In this paper we investigate the more general equation
\begin{eqnarray}\label{prod}
\prod_{k=1}^{n}\Psi_k(g^k(\psi_k(x)))=G(x)
\end{eqnarray}
 on a compact interval $[c,d]$, where $G$, $\Psi_k$ and $\psi_k$ are given for $1\le k \le n$
  and $g$ is unknown.
In the particular case that
$\Psi_k(x)=x^{\lambda_k}$ and $\psi_k(x)=x$,
this equation is exactly the same as (\ref{ggg}),
but in general this equation cannot be
 reduced to a linear combination of iterates by the logarithm conjugacy
because not only nontrivial $\Psi_k, \psi_k$ may be included in (\ref{prod}) but also the domain $[c,d]$ may contain $0$.
In Section \ref{sec-2} we use logarithm conjugacy to reduce the product in (\ref{prod}) to a sum, which allows us to apply the
Banach Contraction Principle and the Schauder fixed point theorem  to prove the existence and uniqueness of continuous solutions.
However, as we remark at the end of the section,
those continuous solutions do not have a domain containing $0$,
referring to the {\it Zero Problem}.
In order to deal with this problem, in Sections \ref{S2} and \ref{S4}  we consider solutions with weaker regularity.
As we require monotonicity in Section \ref{sec-2},  we first give order-preserving solutions in Section \ref{S2} using
 the Knaster-Tarski fixed point theorem for complete lattices.
Then we discuss  semi-continuous solutions in Section  \ref{S4}.
Finally, in Section \ref{sec6} we
illustrate our results with examples and make remarks on integrable solutions.
We leave some problems for future discussion.


\section{Continuous solutions
 }\label{sec-2}

In this section we give results on the existence, uniqueness  and stability of continuous solutions of \eqref{prod} on $J=[c,d]$, where $d>c>0$.
For each compact interval $I:=[a,b]$ in $\mathbb{R}$ with $a<b$,
let
$\mathcal{C}(I,\mathbb{R})$ (resp. $\mathcal{C}(I,\mathbb{R}_+)$, resp. $\mathcal{C}(I,\mathbb{R}_-)$, resp. $\mathcal{C}(I,I)$) be the set of all continuous maps on $I$ into $\mathbb{R}$ (resp. into $\mathbb{R}_+$, resp. into $\mathbb{R}_-$, resp. into $I$).
Then $\mathcal{C}(I,\mathbb{R})$ is a Banach space
in the uniform norm $\|\cdot\|_I$ defined by
$\|f\|_I=\sup \{|f(x)|: x\in I\}$. Considering $g, G, \psi_k\in \mathcal{C}(J,J)$ and $\Psi_k\in \mathcal{C}(J,\mathbb{R}_+)$ for $1\le k\le n$,
we can use the logarithmic map $x\mapsto \log x$
to conjugate them to maps on $I$
and reduce the product in equation (\ref{prod}) on $J$ to a sum to obtain the equation
\begin{eqnarray}\label{sum}
	\sum_{k=1}^{n}\Phi_k(f^k(\phi_k(x)))=F(x)
\end{eqnarray}
on $I$, where $I=\log J:=\{\log x:x\in J\}$, and $f(x)=\log g(e^x)$, $F(x)=\log G(e^x)$, $\Phi_k(x)=\log \Psi_k(e^x)$ and $\phi_k(x)=\log\psi_k(e^x)$  for all $x\in I$ and $1\le k \le n$. More precisely, we have the following.

\begin{pro}\label{P1}
	Let
	$G \in \mathcal{X}\subseteq  \mathcal{C}(J,J)$. Then
	a map $g$ is a solution (resp. unique solution) of \eqref{prod} in $\mathcal{X}'\subseteq \mathcal{C}(J,J)$, where
	 $\Psi_k \in \mathcal{Y}_k\subseteq  \mathcal{C}(J,\mathbb{R}_+)$
	 and $\psi_k \in \mathcal{Z}_k\subseteq  \mathcal{C}(J,J)$ for all $1\le k\le n$,
	if and only if $f=h^{-1}\circ g\circ h$
	is a solution (resp. unique solution) of \eqref{sum} in $\mathcal{\tilde{X}'}\subseteq \mathcal{C}(I,I)$, where $h(x)=e^x$, $I=\log J$, $F=h^{-1}\circ G\circ h\in \tilde{\mathcal{X}}$,
	$\mathcal{\tilde{X}}=\{h^{-1}\circ g\circ h: g\in \mathcal{X}\}\subseteq \mathcal{C}(I,I)$, $\mathcal{\tilde{X}'}=\{h^{-1}\circ g\circ h: g\in \mathcal{X'}\}\subseteq \mathcal{C}(I,I)$,   and $\Phi_k=h^{-1}\circ \Psi_k\circ h \in \mathcal{\tilde{Y}}_k$,   $\phi_k=h^{-1}\circ \psi_k\circ h \in \mathcal{\tilde{Z}}_k$, $\mathcal{\tilde{Y}}_k=\{h^{-1}\circ g\circ h: g\in \mathcal{Y}_k\}\subseteq \mathcal{C}(I,\mathbb{R})$ and $\mathcal{\tilde{Z}}_k=\{h^{-1}\circ g\circ h: g\in \mathcal{Z}_k\}\subseteq \mathcal{C}(I,I)$  for all $1\le k\le n$.
\end{pro}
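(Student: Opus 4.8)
The plan is to exploit that $h(x)=e^x$ is a homeomorphism of $I=\log J$ onto $J$ whose inverse is $h^{-1}=\log$, and that conjugation by $h$ both commutes with iteration and, crucially, converts the product in \eqref{prod} into the sum in \eqref{sum} through $\log(ab)=\log a+\log b$. Since $c>0$ forces $J\subset\mathbb{R}_+$ and each $\Psi_k$ takes values in $\mathbb{R}_+$, every logarithm that appears below is well defined; and since $g,G,\psi_k\in\mathcal{C}(J,J)$ and $\Psi_k\in\mathcal{C}(J,\mathbb{R}_+)$, the conjugates $f=h^{-1}\circ g\circ h$, $F$ and $\phi_k$ lie in $\mathcal{C}(I,I)$ while $\Phi_k\in\mathcal{C}(I,\mathbb{R})$, so all the compositions occurring in \eqref{sum} land in the correct domains.

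First I would record the conjugacy of iterates: a short induction on $k$ using $h\circ h^{-1}=\mathrm{id}$ gives $f^k=h^{-1}\circ g^k\circ h$ for every $k\ge 0$. Next I would substitute the definitions $\Phi_k=h^{-1}\circ\Psi_k\circ h$, $f^k=h^{-1}\circ g^k\circ h$ and $\phi_k=h^{-1}\circ\psi_k\circ h$ into the $k$-th term of \eqref{sum} and telescope the intermediate $h\circ h^{-1}$ factors, obtaining
\begin{eqnarray*}
\Phi_k\circ f^k\circ\phi_k &=& h^{-1}\circ\Psi_k\circ g^k\circ\psi_k\circ h,
\end{eqnarray*}
that is, $\Phi_k(f^k(\phi_k(x)))=\log\big(\Psi_k(g^k(\psi_k(e^x)))\big)$ for every $x\in I$.

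Summing over $k$ and using $h^{-1}=\log$ to turn the sum of logarithms back into the logarithm of a product, the left side of \eqref{sum} becomes $\log\prod_{k=1}^n\Psi_k(g^k(\psi_k(e^x)))$, while its right side is $F(x)=\log\big(G(e^x)\big)$. Because $\log$ is injective, \eqref{sum} holds at $x\in I$ if and only if $\prod_{k=1}^n\Psi_k(g^k(\psi_k(y)))=G(y)$ at the point $y=e^x\in J$; and since $h$ carries $I$ bijectively onto $J$, \eqref{sum} holds on all of $I$ precisely when \eqref{prod} holds on all of $J$. Finally I would observe that $g\mapsto h^{-1}\circ g\circ h$ is a bijection of $\mathcal{X}'$ onto $\tilde{\mathcal{X}}'$ with inverse $f\mapsto h\circ f\circ h^{-1}$ (and likewise for $\mathcal{X}\leftrightarrow\tilde{\mathcal{X}}$, $\mathcal{Y}_k\leftrightarrow\tilde{\mathcal{Y}}_k$ and $\mathcal{Z}_k\leftrightarrow\tilde{\mathcal{Z}}_k$), so membership is preserved in both directions; this upgrades the equation-level equivalence to the asserted ``solution in $\mathcal{X}'$ iff solution in $\tilde{\mathcal{X}}'$'', and since the correspondence is one-to-one it transports uniqueness as well.

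I do not expect a genuine obstacle here: the whole content is the two identities $f^k=h^{-1}\circ g^k\circ h$ and $\log\prod=\sum\log$, after which everything is bookkeeping. The only points demanding care are well-definedness, namely confirming that the hypotheses $c>0$ and $\Psi_k\in\mathcal{C}(J,\mathbb{R}_+)$ keep every argument of a logarithm strictly positive, and checking that each composition $\Phi_k\circ f^k\circ\phi_k$ stays within $I$ (respectively maps into $\mathbb{R}$) so that the iterates and the final sum are legitimately defined.
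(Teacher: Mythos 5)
Your proposal is correct and follows essentially the same route as the paper's proof: conjugation by $h(x)=e^x$, the identity $\log\prod=\sum\log$, and the iterate conjugacy $f^k=h^{-1}\circ g^k\circ h$, with uniqueness transported through the bijection $g\mapsto h^{-1}\circ g\circ h$ between $\mathcal{X}'$ and $\tilde{\mathcal{X}}'$ (the paper phrases this last step as pulling back two solutions of \eqref{sum} to solutions of \eqref{prod}, which is the same argument). Your write-up is merely more explicit about the induction for $f^k$ and the injectivity of $\log$, both of which the paper leaves implicit.
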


\begin{proof}
	Let $g$ be a solution of \eqref{prod} in $\mathcal{X}'$. Since $h$ is a homeomorphism of $I$ onto $J$, clearly $\mathcal{\tilde{X}'} \subseteq \mathcal{C}(I,I)$ and $f\in \mathcal{\tilde{X}'}$.  Also, for each $x\in I$, we have
	\begin{align*}
		\sum_{k=1}^{n}\Phi_k(f^k(\phi_k(x)))&=\sum_{k=1}^{n} \log\Psi_k (g^k(\psi_k(e^x)))\\
		&= \log\left(\prod_{k=1}^{n}\Psi_k(g^k(\psi_k(e^x)))\right)
		=\log G(e^x)=F(x),
	\end{align*}
	implying that $f$ is a solution of \eqref{sum} on $I$.  The converse follows similarly. Next, in order to prove the uniqueness, assume that \eqref{prod} has a unique solution in $\mathcal{X'}$ and
	suppose that $f_1, f_2$ are any two solutions of \eqref{sum} in $\mathcal{\tilde{X}'}$. Then, by the ``if'' part of what we have proved above, there exist solutions  $g_1$ and $g_2$ of \eqref{prod} in $\mathcal{X'}$ such that $f_1=h^{-1}\circ g_1\circ h$ and $f_2=h^{-1}\circ g_2\circ h$. By our assumption, we have $g_1=g_2$ and therefore $f_1=f_2$. The proof of the converse is similar.
\end{proof}

By Proposition \ref{P1}, it suffices to prove the existence and uniqueness of continuous solutions for \eqref{sum} on $I=\log J$
in order to prove those for \eqref{prod} on $J$. For compact intervals $J$ and $I$, as defined at the beginning of the section, and $M, \delta\ge 0$, let
\begin{align*}
	\mathcal{G}(J;\delta,M)&:=\left\{g\in \mathcal{C}(J,J):~g(c)=c, g(d)=d\right. \text{and}\\
	& ~~~~\left.(x/y)^\delta \le g(x)/g(y)\le (x/y)^M, \forall x, y \in J~\mbox{with}~x\ge y\right\}\\
	\mathcal{F}(I;\delta,M)&:=\{f\in \mathcal{C}(I,I):~f(a)=a, f(b)=b~\text{and}\\
	& ~~~~~\delta(x-y)\le f(x)-f(y)\le M(x-y), \forall x, y \in I~\mbox{with}~x\ge y\}.
\end{align*}
Then an easy verification shows that
\begin{align}\label{G-F}
	g\in 	\mathcal{G}(J;\delta,M)\quad\text{if and only if}\quad h^{-1}\circ g\circ h  \in \mathcal{F}(I;\delta,M)	
\end{align}
for all $M,\delta \ge 0$, where $h(x)=e^x$ and $I=\log J$. Furthermore,  Proposition 2.1 in \cite{Mu-Su} and \eqref{G-F} shows that
\begin{align*}
	\mathcal{G}(J;\delta,M)&=\left\{\begin{array}{cll}
		\emptyset &\text{if}&M<1~\text{or}~\delta>1,\\
		\{{\rm id}\}&\text{if}&M=1~\text{or}~\delta=1,
	\end{array}\right.
\end{align*}
indicating that we cannot seek solutions of \eqref{prod} in $\mathcal{G}(J;\delta,M)$ without restricting $M$ and $\delta$.

We now state two lemmas
on the existence, uniqueness and stability of solutions of continuous solutions for \eqref{sum} on $I$. They
 are given in Theorems 3.1 and 3.4 in \cite{Mu-Su} on the interval $[0,1]$, but
the generalization to $[a,b]$ is trivial.

\begin{lm}
\label{L1}
		Let $0<\lambda_1<1$ and $\lambda_k \ge 0$ for $2\le k\le n$
	such that $\sum_{k=1}^{n}\lambda
	_k=1$,
	and	$\phi_k={\rm id}$ and
	 $\Phi_k=\lambda_k\Upsilon_k$ with $\Upsilon_k\in \mathcal{F}(I;l_k,L_k)$ such that  $L_k\ge  l_k\ge 0$ for all $1\le k \le n$. If
	$F\in \mathcal{F}(I;K_1\delta, K_0M)$, then \eqref{sum} has a solution $f$ in $\mathcal{F}(I;\delta, M)$, where $0<\delta< 1< M$, $K_0=\sum_{k=1}^{n}\lambda_kl_k\delta^{k-1}$  and $K_1=\sum_{k=1}^{n}\lambda_kL_kM^{k-1}$.
\end{lm}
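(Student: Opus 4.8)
The statement is the analogue on a general compact interval $I=[a,b]$ of Theorem 3.1 in \cite{Mu-Su}, which is proved there on $[0,1]$; accordingly my plan is to reduce it to that known case by an affine conjugacy and, for completeness, to record the fixed-point argument that drives it. I would take the increasing affine homeomorphism $\ell:[0,1]\to I$, $\ell(t)=a+(b-a)t$, and for a self-map $u$ of $I$ write $\tilde u:=\ell^{-1}\circ u\circ\ell$. Since $\ell$ has constant slope $b-a$, conjugation by $\ell$ leaves all difference quotients unchanged, so $u\in\mathcal{F}(I;\delta,M)$ iff $\tilde u\in\mathcal{F}([0,1];\delta,M)$ with the \emph{same} bounds $\delta,M$; likewise $\Upsilon_k\in\mathcal{F}(I;l_k,L_k)$ passes to $\tilde\Upsilon_k\in\mathcal{F}([0,1];l_k,L_k)$ and $F\in\mathcal{F}(I;K_1\delta,K_0M)$ to $\tilde F\in\mathcal{F}([0,1];K_1\delta,K_0M)$. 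As $K_0,K_1$ depend only on $\lambda_k,l_k,L_k,\delta,M$ and not on the interval, they are unchanged.

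The single point needing a line of checking is that \eqref{sum} itself is preserved. Using $\ell^{-1}(v)=(v-a)/(b-a)$, $\phi_k=\mathrm{id}$, and $\tilde f^{\,k}=\widetilde{f^k}$, one finds $\sum_{k=1}^n\lambda_k\tilde\Upsilon_k(\tilde f^{\,k}(t))=\frac{1}{b-a}\big(\sum_{k=1}^n\lambda_k\Upsilon_k(f^k(\ell(t)))-a\sum_{k=1}^n\lambda_k\big)$, and because $\sum_{k=1}^n\lambda_k=1$ the shift $a\sum_k\lambda_k=a$ cancels exactly against the $-a$ in $\tilde F=\ell^{-1}\circ F\circ\ell$; hence $f$ solves \eqref{sum} on $I$ iff $\tilde f$ solves the corresponding equation on $[0,1]$. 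Thus \cite{Mu-Su}, Theorem 3.1 supplies $\tilde f\in\mathcal{F}([0,1];\delta,M)$, and $f:=\ell\circ\tilde f\circ\ell^{-1}\in\mathcal{F}(I;\delta,M)$ is the desired solution. The normalization $\sum\lambda_k=1$ is precisely what makes the reduction ``trivial'', as the text asserts, so in this route there is essentially no obstacle beyond this bookkeeping.

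For completeness I would indicate the engine behind Theorem 3.1 of \cite{Mu-Su}, since it explains the otherwise opaque hypothesis $F\in\mathcal{F}(I;K_1\delta,K_0M)$. Writing $f^k=f^{k-1}\circ f$ and treating the innermost copy of $f$ as the unknown, set $H_f:=\sum_{k=1}^n\lambda_k\Upsilon_k\circ f^{k-1}$ and define the operator $\mathcal{T}f:=H_f^{-1}\circ F$; a fixed point $f=\mathcal{T}f$ satisfies $\sum_k\lambda_k\Upsilon_k(f^{k-1}(f(x)))=F(x)$, i.e.\ \eqref{sum}. For $f\in\mathcal{F}(I;\delta,M)$ each $\Upsilon_k\circ f^{k-1}$ has difference quotients in $[l_k\delta^{k-1},L_kM^{k-1}]$, so $H_f$ has difference quotients in $[K_0,K_1]$ and fixes $a,b$; hence $H_f$ is invertible (here $K_0>0$ is forced, as $F\in\mathcal{F}(I;K_1\delta,K_0M)$ with $F(a)=a<b=F(b)$ requires $K_0M\ge 1$), and $H_f^{-1}\circ F$ has difference quotients in $[K_1\delta/K_1,\,K_0M/K_0]=[\delta,M]$ while again fixing $a,b$. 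This exact cancellation is what makes $\mathcal{T}$ map $\mathcal{F}(I;\delta,M)$ into itself, and it is why the bounds on $F$ are chosen to be $K_1\delta$ and $K_0M$.

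It then remains to check Schauder's hypotheses: $\mathcal{F}(I;\delta,M)$ is nonempty (it contains $\mathrm{id}$ since $\delta<1<M$), convex (its defining inequalities and endpoint conditions are linear in $f$), and compact in $(\mathcal{C}(I,\mathbb{R}),\|\cdot\|_I)$ by Arzel\`a--Ascoli (the bound $M$ is a uniform Lipschitz constant, giving equicontinuity; self-mapping of $I$ gives uniform boundedness; the closed inequalities pass to uniform limits). Applying Schauder's fixed point theorem to $\mathcal{T}$ yields the solution. The step I expect to be the main obstacle in this self-contained route is not the self-mapping, which the slope bookkeeping above renders transparent, but the \emph{continuity} of $\mathcal{T}$ in the uniform norm: one must show $f\mapsto f^{\,k-1}$ is continuous on $\mathcal{F}(I;\delta,M)$ (using the uniform Lipschitz bound $M$ to control error propagation through iteration) and that $H_f\mapsto H_f^{-1}$ is continuous (inversion of increasing functions with slopes bounded below by $K_0>0$), after which continuity of $\mathcal{T}$ follows by composition.
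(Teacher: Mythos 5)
Your proposal is correct and takes essentially the same route as the paper: the paper offers no proof of this lemma at all, merely citing Theorems 3.1 and 3.4 of \cite{Mu-Su} on $[0,1]$ and asserting that ``the generalization to $[a,b]$ is trivial,'' and your affine-conjugacy reduction (with the key observation that $\sum_{k=1}^{n}\lambda_k=1$ is exactly what makes the shift $a\sum_k\lambda_k$ cancel, so the equation \eqref{sum} is preserved under conjugation) is precisely that trivial generalization made explicit. Your supplementary sketch of the operator $\mathcal{T}f=H_f^{-1}\circ F$ and the Schauder argument reproduces the engine of the cited theorem of \cite{Mu-Su} rather than departing from it.
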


\begin{lm}
\label{L2}
In addition to the hypotheses of Lemma \ref{L1}, suppose that
\begin{eqnarray}
\label{KKK}
K:=\lambda_1l_1-\sum_{k=2}^{n}\lambda_k\left(L_k\frac{M^{k-1}-1}{M-1}-l_k\delta^{k-1}\right)>0.
\end{eqnarray}
Then for each $F\in  \mathcal{F}(I;K_1\delta, K_0M)$,  \eqref{sum} has a unique solution $f$ in $\mathcal{F}(I;\delta, M)$. Furthermore,  if  $F_1\in  \mathcal{F}(I;K_1\delta, K_0M)$ and $f_1\in \mathcal{F}(I;\delta, M)$ satisfies
		$	\sum_{k=1}^{n}\Phi_k(f^k(\phi_k(x)))=F_1(x)$ for all $x\in I$, then
	\begin{eqnarray}\label{01}
		\|f-f_1\|_I\le \frac{1}{K}\|F-F_1\|_I,
	\end{eqnarray}
i.e., the solution $f$ continuously depends on $F$.
\end{lm}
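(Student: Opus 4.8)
The plan is to obtain the stability estimate \eqref{01} as an a priori bound comparing any two solutions, from which uniqueness is immediate on taking $F_1=F$; existence is already provided by Lemma \ref{L1}, whose hypotheses are only strengthened here by the requirement $K>0$. Throughout I would use that each $f\in\mathcal{F}(I;\delta,M)$ is increasing and $M$-Lipschitz, so that $f^{k-1}$ is $M^{k-1}$-Lipschitz, $f^{k-1}\in\mathcal{F}(I;\delta^{k-1},M^{k-1})$, and, since $\Upsilon_k\in\mathcal{F}(I;l_k,L_k)$, the composition $\Upsilon_k\circ f^{k-1}$ lies in $\mathcal{F}(I;l_k\delta^{k-1},L_kM^{k-1})$.

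First I would fix two solutions $f,f_1\in\mathcal{F}(I;\delta,M)$ of \eqref{sum} with data $F$ and $F_1$ (recall $\phi_k=\mathrm{id}$, $\Phi_k=\lambda_k\Upsilon_k$), subtract the equations, and isolate the $k=1$ term:
\[
\lambda_1\bigl(\Upsilon_1(f(x))-\Upsilon_1(f_1(x))\bigr)+\sum_{k=2}^{n}\lambda_k\bigl(\Upsilon_k(f^k(x))-\Upsilon_k(f_1^k(x))\bigr)=F(x)-F_1(x).
\]
The crux is to split each higher-order difference by writing $f^k(x)=f^{k-1}(f(x))$, $f_1^k(x)=f_1^{k-1}(f_1(x))$ and inserting $\Upsilon_k(f_1^{k-1}(f(x)))$, so that $\Upsilon_k(f^k(x))-\Upsilon_k(f_1^k(x))=A_k+B_k$ with
\[
A_k=\Upsilon_k\bigl(f^{k-1}(f(x))\bigr)-\Upsilon_k\bigl(f_1^{k-1}(f(x))\bigr),\qquad B_k=\Upsilon_k\bigl(f_1^{k-1}(f(x))\bigr)-\Upsilon_k\bigl(f_1^{k-1}(f_1(x))\bigr).
\]
Here $B_k$ is the increasing map $\Upsilon_k\circ f_1^{k-1}$ evaluated at the two points $f(x),f_1(x)$, so it has the same sign as $f(x)-f_1(x)$ and $|B_k|\ge l_k\delta^{k-1}|f(x)-f_1(x)|$, while $A_k$ compares the iterates $f^{k-1},f_1^{k-1}$ at the common point $f(x)$, whence $|A_k|\le L_k\|f^{k-1}-f_1^{k-1}\|_I\le L_k\frac{M^{k-1}-1}{M-1}\|f-f_1\|_I$ by the standard telescoping bound for iterates of $M$-Lipschitz maps.

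The decisive step is a sign argument: the $k=1$ term and all $B_k$ share the sign of $f(x)-f_1(x)$ and hence reinforce one another, whereas the $A_k$ and $F-F_1$ are the error. Moving the $A_k$ to the right and estimating the left from below by $\bigl(\lambda_1l_1+\sum_{k=2}^{n}\lambda_kl_k\delta^{k-1}\bigr)|f(x)-f_1(x)|=K_0|f(x)-f_1(x)|$, I would reach, for every $x\in I$,
\[
K_0\,|f(x)-f_1(x)|\le\|F-F_1\|_I+\Bigl(\sum_{k=2}^{n}\lambda_kL_k\tfrac{M^{k-1}-1}{M-1}\Bigr)\|f-f_1\|_I.
\]
Taking the supremum over $x$ and rearranging leaves the coefficient $K_0-\sum_{k=2}^{n}\lambda_kL_k\frac{M^{k-1}-1}{M-1}=K>0$ on the left, giving $\|f-f_1\|_I\le\frac1K\|F-F_1\|_I$; setting $F_1=F$ then forces $f=f_1$, so the solution furnished by Lemma \ref{L1} is unique.

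I expect the main obstacle to be the bookkeeping of the decomposition $A_k+B_k$: one must isolate precisely the component $B_k$ on which monotonicity places the lower-bound quantities $l_k\delta^{k-1}$ on the reinforcing side, and control $A_k$ only through the upper iterate bound $\frac{M^{k-1}-1}{M-1}$ — obtaining $M^{k-1}$ rather than the cruder $M^{k}$ that a direct estimate of $\|f^k-f_1^k\|_I$ would yield. Matching these two contributions exactly against the definition of $K$ (which, being positive, in particular forces $\lambda_1 l_1>0$ and hence genuine coercivity of the $k=1$ term) is where the argument must be carried out with care.
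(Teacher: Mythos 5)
Your proof is correct. One point of comparison: the paper does not actually prove Lemma \ref{L2} at all --- it only cites Theorems 3.1 and 3.4 of \cite{Mu-Su} (stated there on $[0,1]$) and remarks that the generalization to $[a,b]$ is trivial. Your argument is therefore a genuine reconstruction, and it is the right one: existence is quoted from Lemma \ref{L1}, and the stability estimate \eqref{01} (hence uniqueness, on taking $F_1=F$) follows from the pointwise identity obtained by subtracting the two equations, the splitting $\Upsilon_k(f^k(x))-\Upsilon_k(f_1^k(x))=A_k+B_k$, the observation that the $k=1$ term and all $B_k$ carry the sign of $f(x)-f_1(x)$ (giving the coercive constant $K_0=\lambda_1 l_1+\sum_{k=2}^n\lambda_k l_k\delta^{k-1}$, where $\lambda_k\ge 0$ and the monotonicity of $\Upsilon_k\circ f_1^{k-1}$ are essential), and the telescoping iterate bound $\|f^{k-1}-f_1^{k-1}\|_I\le\frac{M^{k-1}-1}{M-1}\|f-f_1\|_I$ applied to $A_k$. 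Since $K_0-\sum_{k=2}^n\lambda_k L_k\frac{M^{k-1}-1}{M-1}$ is exactly the $K$ of \eqref{KKK}, your estimate reproduces the stated constant, which strongly suggests this is also the mechanism of the cited proof. The comparison also shows your decomposition is not merely convenient but necessary: a cruder argument bounding $\|\Upsilon_k(f^k)-\Upsilon_k(f_1^k)\|_I$ directly by $L_k\frac{M^k-1}{M-1}\|f-f_1\|_I$ would replace $K$ by $\lambda_1 l_1-\sum_{k=2}^n\lambda_k L_k\frac{M^k-1}{M-1}$, which is smaller by $\sum_{k=2}^n\lambda_k\bigl(L_kM^{k-1}+l_k\delta^{k-1}\bigr)$ and would prove a strictly weaker statement than the one claimed (your parenthetical ``$M^k$'' is really $\frac{M^k-1}{M-1}$, but the point stands).
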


Since $M>1>\delta>0$ and $L_k\ge l_k\ge 0$
 for all $1\le k\le n$, the condition (\ref{KKK}) requires
$\lambda_1$ to be large in comparison with other $\lambda_k$'s. This condition will be checked in Example \ref{Exmp1}.

\begin{thm}\label{C-existance}
	Let $0<\lambda_1<1$ and $\lambda_k \ge 0$ for $2\le k\le n$
	such that $\sum_{k=1}^{n}\lambda_k=1$,
	and let	$\psi_k={\rm id}$ and
	 $\Psi_k(\cdot)=(\Xi_k(\cdot))^{\lambda_k}$ with $\Xi_k\in \mathcal{G}(J;l_k,L_k)$ such that  $L_k\ge l_k\ge 0$ for all $1\le k \le n$.
	 Suppose further that (\ref{KKK}) is true.
Then
\eqref{prod} has a unique solution $g$ in $\mathcal{G}(J;\delta, M)$, which depends on $G$  continuously,
if
$G\in \mathcal{G}(J;K_1\delta, K_0M)$, where
$0<\delta< 1< M$, $K_0=\sum_{k=1}^{n}\lambda_kl_k\delta^{k-1}$ and
 $K_1=\sum_{k=1}^{n}\lambda_kL_kM^{k-1}$.
\end{thm}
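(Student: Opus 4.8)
The plan is to transport the whole problem from the multiplicative setting on $J$ to the additive setting on $I=\log J$ via Proposition~\ref{P1}, apply Lemma~\ref{L2} there, and then pull the resulting solution back to $J$. First I would set $h(x)=e^x$ and, exactly as in Proposition~\ref{P1}, put $F=h^{-1}\circ G\circ h$, $\Phi_k=h^{-1}\circ\Psi_k\circ h$ and $\phi_k=h^{-1}\circ\psi_k\circ h$. By that proposition, producing the unique solution $f$ of \eqref{sum} on $I$ in the appropriate class and then taking $g=h\circ f\circ h^{-1}$ yields the unique solution of \eqref{prod} on $J$ in the corresponding class; so it suffices to verify that the hypotheses of Lemma~\ref{L2} hold after conjugation.

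Next I would check that conjugation carries the data of the theorem to the data of Lemma~\ref{L2}. Since $\psi_k=\mathrm{id}$, we get $\phi_k=h^{-1}\circ\mathrm{id}\circ h=\mathrm{id}$. Writing $\Upsilon_k=h^{-1}\circ\Xi_k\circ h$, the assumption $\Psi_k(\cdot)=(\Xi_k(\cdot))^{\lambda_k}$ gives, for $x\in I$,
\[
\Phi_k(x)=\log\Psi_k(e^x)=\log\big(\Xi_k(e^x)^{\lambda_k}\big)=\lambda_k\log\Xi_k(e^x)=\lambda_k\Upsilon_k(x),
\]
so $\Phi_k=\lambda_k\Upsilon_k$. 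By the equivalence \eqref{G-F}, $\Xi_k\in\mathcal{G}(J;l_k,L_k)$ forces $\Upsilon_k\in\mathcal{F}(I;l_k,L_k)$, and $G\in\mathcal{G}(J;K_1\delta,K_0M)$ forces $F\in\mathcal{F}(I;K_1\delta,K_0M)$. Because the constants $K_0,K_1$ and the quantity $K$ in \eqref{KKK} are defined identically in the theorem and in Lemma~\ref{L2}, condition \eqref{KKK} is exactly the hypothesis needed. Hence Lemma~\ref{L2} provides a unique $f\in\mathcal{F}(I;\delta,M)$ solving \eqref{sum}, and Proposition~\ref{P1} together with \eqref{G-F} returns the unique $g\in\mathcal{G}(J;\delta,M)$ solving \eqref{prod}.

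The one step that needs care is continuous dependence, because the logarithmic conjugacy is nonlinear and does not preserve the uniform norm, so the estimate \eqref{01} on $I$ cannot be invoked verbatim on $J$. Here I would use that all maps in question take values in the compact interval $[c,d]\subset(0,\infty)$, on which $\log$ is Lipschitz with constant $1/c$, while $\exp$ is Lipschitz with constant $d$ on $I=[\log c,\log d]$. Thus if $G_1\in\mathcal{G}(J;K_1\delta,K_0M)$ and $g_1\in\mathcal{G}(J;\delta,M)$ is the corresponding solution, setting $F_1=h^{-1}\circ G_1\circ h$ and $f_1=h^{-1}\circ g_1\circ h$ gives $\|F-F_1\|_I\le \tfrac{1}{c}\|G-G_1\|_J$, while $g=h\circ f\circ h^{-1}$ gives $\|g-g_1\|_J\le d\,\|f-f_1\|_I$. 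Chaining these with \eqref{01} yields
\[
\|g-g_1\|_J\le d\,\|f-f_1\|_I\le \frac{d}{K}\|F-F_1\|_I\le \frac{d}{cK}\|G-G_1\|_J,
\]
which establishes the continuous (indeed Lipschitz) dependence of $g$ on $G$ and completes the argument. The main obstacle is thus not existence or uniqueness, which are immediate once the dictionary between $\mathcal{G}$ and $\mathcal{F}$ is set up, but the bookkeeping in this last estimate confirming that smallness of $\|G-G_1\|_J$ forces smallness of $\|g-g_1\|_J$.
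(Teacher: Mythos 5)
Your proposal is correct and follows essentially the same route as the paper's own proof: conjugate by $h(x)=e^x$, use \eqref{G-F} to transfer the hypotheses, apply Lemma~\ref{L2} on $I$ to get existence and uniqueness, pull back via Proposition~\ref{P1}, and then chain the Lipschitz constants $1/c$ of $\log$ on $J$ and $d$ of $\exp$ on $I$ with \eqref{01} to get $\|g-g_1\|_J\le \frac{d}{cK}\|G-G_1\|_J$. The paper's argument is identical, including the final constant, so there is nothing to add.
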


\begin{proof}
Let $G\in \mathcal{G}(J;K_1\delta, K_0M)$, $a:=\log c$ and $b:=\log d$. Then we obtain the interval $I=[a,b]$ with $a<b$,
which satisfies $I=\log J$.
Further, we have $F:=h^{-1}\circ G\circ h \in  \mathcal{F}(I;K_1\delta, K_0M)$ and $\phi_k:=h^{-1}\circ \psi_k\circ h={\rm id}$
for all $1\le k\le n$, where  $h (x)=e^x$. Moreover  $\Phi_k:=h^{-1}\circ \Psi_k\circ h$ and $\Upsilon_k:=h^{-1}\circ \Xi_k\circ h$ satisfy $\Phi_k=\lambda_k\Upsilon_k$ and $\Upsilon_k\in  \mathcal{F}(I;l_k,L_k)$ for all $1\le k\le n$. Therefore,
as $K>0$ assumed in (\ref{KKK}),
by Lemmas \ref{L1} and \ref{L2} we see that \eqref{sum} has a unique solution $f$ in $\mathcal{F}(I;\delta, M)$.	
This implies by Proposition \ref{P1}  that $g:=h \circ f\circ h^{-1}$ is the unique solution of \eqref{prod} in $\mathcal{G}(J;\delta, M)$.

	
	Next, in order to prove the continuous dependency of $g$ on $G$, suppose that 	$G_1\in \mathcal{G}(J;K_1\delta, K_0M)$ and $g_1 \in \mathcal{G}(J;\delta, M)$ satisfy that
	\begin{align*}
		\prod_{k=1}^{n}\Psi_k(g_1^{k}(\psi_k(x)))&=G_1(x)
	\end{align*}
	on $J$. Let   $F_1:=h^{-1} \circ G_1\circ h$ and $f_1:=h^{-1} \circ g_1\circ h$.
	Since $G_1 \in \mathcal{G}(J;K_1\delta, K_0M)$,  we have $F_1\in \mathcal{F}(I;K_1\delta, K_0M)$.
	Similarly, we see that $f_1\in \mathcal{F}(I;\delta, M)$.
	Further,  $f_1$ satisfies
	\begin{align*}
		\sum_{k=1}^{n} \Phi_k(f_1^k(\phi_k(x)))&=F_1(x)
	\end{align*}
	on $I$. Moreover, by Lemma \ref{L2} we know that  \eqref{01} is satisfied.	
	Since the map $x\mapsto e^x$ is continuously differentiable on $I$ with bounded derivative, it is a Lipschitzian
	map on $I$. In fact,
$		|e^x-e^y|<e^b|x-y|$ for all $x, y\in I.$
	So, for each $x\in J$, we have
	\begin{eqnarray*}
		|g(x)-g_1(x)|=|e^{f(\log x)}-e^{f_1(\log x)}|
		<e^b|f(\log x)-f_1(\log x)|
		\le  e^b\|f-f_1\|_I,
	\end{eqnarray*}
	implying that
	\begin{align}\label{03}
		\|g-g_1\|_J&\le e^b \|f-f_1\|_I
\le  \frac{d}{K} \|F-F_1\|_I~~(\text{using} ~\eqref{01}).
	\end{align}
Since the map $x\mapsto \log x$ is continuously differentiable on $J$ with bounded derivative, it is a Lipschitzian
	map on $J$. In fact,
$		|\log x-\log y|<\frac{1}{c}|x-y|$ for all $x, y\in J.$
	Therefore, for each $x\in I$, we have
	\begin{eqnarray*}
		|F(x)-F_1(x)|=|\log G(e^x)-\log G_1(e^x)|
		< \frac{1}{c}|G(e^x)-G_1(e^x)|
		\le \frac{1}{c}\|G-G_1\|_J,
	\end{eqnarray*}
	implying that
	\begin{eqnarray}\label{04}
		\|F-F_1\|_I\le \frac{1}{c} \|G-G_1\|_J.
	\end{eqnarray}
	Then,  from \eqref{03} and \eqref{04} we have
	\begin{eqnarray*}
		\|g-g_1\|_J\le \frac{d}{cK}\|G-G_1\|_J.
	\end{eqnarray*}
This completes the proof.
\end{proof}

The assumptions that $0< \lambda_1 <1$ and $\sum_{k=1}^{n}\lambda_k=1$, made in Theorem \ref{C-existance}, is not strong.
In fact, if $\lambda_1>1$
or $\sum_{k=1}^{n}\lambda_k>1$, then we can divide
all the exponents $\lambda_k$s in \eqref{prod}
 by $\sum_{k=1}^{n}\lambda_k$ to get the normalized equation,
but the assumptions on $G$ have to be modified suitably.

Further, although Theorem \ref{C-existance} is given for $J$ such that $d>c>0$, we can use it, together with an idea of conjugation through the
reflection map, to give a similar result on continuous solutions of \eqref{prod} when $c<d<0$. More precisely, the following result
shows how to deduce continuous  solutions of \eqref{prod} on $J$ when $c<d<0$ from those when $d>c>0$ and
vice versa.

\begin{pro}\label{p1}
	Suppose that $d>c>0$.
A map $g$ is a solution (resp. unique solution) of \eqref{prod} in $\mathcal{X}'\subseteq  \mathcal{C}(J,J)$ for $G\in \mathcal{X}\subseteq \mathcal{C}(J,J)$, where $\Psi_k \in \mathcal{Y}_k\subseteq  \mathcal{C}(J,\mathbb{R}_-)$ and $\psi_k \in \mathcal{Z}_k\subseteq  \mathcal{C}(J,J)$ for all $1\le k\le n$, if and only if $\tilde{g}=h^{-1}\circ g\circ h$ is a solution (resp. unique solution) of the equation
	\begin{eqnarray}\label{tildeG}
		\prod_{k=1}^{n}\tilde{\Psi}_k(g^k(\tilde{\psi}_k(x)))=\tilde{G}(x)
	\end{eqnarray}
	in $\mathcal{\tilde{X'}}\subseteq \mathcal{C}(\tilde{J},\tilde{J})$ for $\tilde{G} \in \mathcal{\tilde{X}}\subseteq  \mathcal{C}(\tilde{J},\tilde{J})$,
	where $h(x)=-x$, $\tilde{J}=-J:=\{-x:x\in J\}$, $\mathcal{\tilde{X}}=\{h^{-1}\circ g\circ h: g\in \mathcal{X}\} \subseteq \mathcal{C}(\tilde{J},\tilde{J})$, $\mathcal{\tilde{X}'}=\{h^{-1}\circ g\circ h: g\in \mathcal{X'}\}\subseteq \mathcal{C}(\tilde{J},\tilde{J})$, $\tilde{G}=h^{-1}\circ G\circ h$, and $\tilde{\Psi}_k=h^{-1}\circ \Psi_k\circ h \in \mathcal{\tilde{Y}}_k$,   $\tilde{\psi}_k=h^{-1}\circ \psi_k\circ h \in \mathcal{\tilde{Z}}_k$, $\mathcal{\tilde{Y}}_k=\{h^{-1}\circ g\circ h: g\in \mathcal{Y}_k\}\subseteq \mathcal{C}(\tilde{J},\mathbb{R}_+)$ and $\mathcal{\tilde{Z}}_k=\{h^{-1}\circ g\circ h: g\in \mathcal{Z}_k\}\subseteq \mathcal{C}(\tilde{J},\tilde{J})$  for all $1\le k\le n$.
\end{pro}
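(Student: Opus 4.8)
The plan is to prove Proposition \ref{p1} by the same conjugation scheme used for Proposition \ref{P1}, only now the conjugating homeomorphism is the reflection $h(x)=-x$ rather than the exponential. Since $h$ is an involution ($h^{-1}(x)=h(x)=-x$) carrying $J=[c,d]\subseteq\mathbb{R}_+$ onto $\tilde J=-J=[-d,-c]\subseteq\mathbb{R}_-$, it induces a bijection $g\mapsto\tilde g:=h^{-1}\circ g\circ h$ of $\mathcal C(J,J)$ onto $\mathcal C(\tilde J,\tilde J)$ whose inverse is $\tilde g\mapsto h\circ\tilde g\circ h^{-1}$; restricting it gives $\mathcal X'\leftrightarrow\tilde{\mathcal X}'$. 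The algebraic identity that drives everything is that conjugation commutes with iteration, $\tilde g^{\,k}=h^{-1}\circ g^{k}\circ h$ for every $k$ (by induction, using $h\circ h^{-1}=\mathrm{id}$), together with the analogous definitions $\tilde\Psi_k=h^{-1}\circ\Psi_k\circ h$, $\tilde\psi_k=h^{-1}\circ\psi_k\circ h$ and $\tilde G=h^{-1}\circ G\circ h$.

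First I would verify the forward implication by a direct substitution, mirroring the displayed computation in the proof of Proposition \ref{P1}. Fixing $y\in\tilde J$ and writing $x=h(y)=-y\in J$, I would expand the $k$-th factor of the left-hand side of \eqref{tildeG} step by step: peel off $\tilde\psi_k$, then $\tilde g^{\,k}$, then $\tilde\Psi_k$, each time using $h^{-1}\circ(\cdot)\circ h$ and the involution $h\circ h=\mathrm{id}$, so as to rewrite $\tilde\Psi_k(\tilde g^{\,k}(\tilde\psi_k(y)))$ in terms of $\Psi_k(g^{k}(\psi_k(x)))$. Collecting the factors and comparing with $\tilde G(y)=h^{-1}(G(h(y)))$ should turn \eqref{tildeG} evaluated at $y$ into \eqref{prod} evaluated at $x$; since $x$ ranges over all of $J$ as $y$ ranges over $\tilde J$, the two equations are equivalent. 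The converse is obtained by reading the same chain of equalities backwards.

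The delicate point, and the one I would treat most carefully, is the sign bookkeeping forced by the orientation-reversing nature of $h$. Because $h^{-1}(z)=-z$, each conjugated factor satisfies $\tilde\Psi_k(z)=-\Psi_k(-z)$, which is exactly what converts the $\mathbb{R}_-$-valued $\Psi_k$ into the $\mathbb{R}_+$-valued $\tilde\Psi_k$ (and likewise sends the $J$-valued $G$ to the $\tilde J$-valued $\tilde G$), while a further sign is produced inside $\tilde g^{\,k}=h^{-1}\circ g^{k}\circ h$. I would therefore keep explicit track of how many minus signs are generated by the $n$ factors of the product on the left versus by the single reflection on the right, and check that they cancel so that \eqref{tildeG} reduces to \eqref{prod} itself rather than to $\prod_{k}\Psi_k(g^{k}(\psi_k(x)))=\pm G(x)$ with the wrong sign. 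This is the only place where the specific structure of the reflection, as opposed to an arbitrary conjugacy, enters, and it is where I expect the real care to be required.

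Finally, for the uniqueness and the ``resp.'' clauses I would argue exactly as in Proposition \ref{P1}: assuming \eqref{prod} has a unique solution in $\mathcal X'$, any two solutions $\tilde g_1,\tilde g_2$ of \eqref{tildeG} in $\tilde{\mathcal X}'$ pull back under $g=h\circ\tilde g\circ h^{-1}$ to solutions of \eqref{prod} in $\mathcal X'$, which must coincide; applying the bijection $g\mapsto\tilde g$ then forces $\tilde g_1=\tilde g_2$, and the reverse direction is symmetric. No fixed-point theorem or quantitative estimate is needed here: the entire content is the conjugacy, so once the sign accounting is settled the argument is a purely formal computation.
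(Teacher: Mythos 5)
Your plan reproduces the paper's own strategy for Proposition \ref{p1} (conjugation by the involution $h(x)=-x$, the identity $\tilde g^{\,k}=h^{-1}\circ g^{k}\circ h$, and transport of uniqueness through the bijection $\mathcal X'\leftrightarrow\tilde{\mathcal X}'$), but the one step you explicitly deferred --- ``keep track of the minus signs and check that they cancel'' --- is precisely the step that fails, and no amount of care makes it work. Carrying out your own bookkeeping: for $y\in\tilde J$ each factor satisfies $\tilde\Psi_k(\tilde g^{\,k}(\tilde\psi_k(y)))=-\Psi_k(g^{k}(\psi_k(-y)))$, so if $g$ solves \eqref{prod} the left-hand side of \eqref{tildeG} equals $(-1)^n\prod_{k=1}^{n}\Psi_k(g^{k}(\psi_k(-y)))=(-1)^nG(-y)$, while the right-hand side is $\tilde G(y)=-G(-y)$. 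Since $G(-y)\in J\subseteq\mathbb{R}_+$ is nonzero, equality forces $(-1)^n=-1$: the $n$ minus signs produced on the left cancel against the single one on the right only when $n$ is odd.

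This parity constraint is fatal given the sign hypotheses of the statement. Because each $\Psi_k$ is $\mathbb{R}_-$-valued and $G$ is $J$-valued with $J\subseteq\mathbb{R}_+$, equation \eqref{prod} can have solutions only when $n$ is even (a product of $n$ negative factors must be positive); and because each $\tilde\Psi_k$ is $\mathbb{R}_+$-valued while $\tilde G$ is $\tilde J$-valued with $\tilde J\subseteq\mathbb{R}_-$, equation \eqref{tildeG} has no solutions at all. So for odd $n$ the equivalence is vacuous, and for even $n$ the ``only if'' direction is simply false: on $J=[1/2,1]$ take $n=2$, $\Psi_1=\Psi_2=-{\rm id}$, $\psi_1=\psi_2={\rm id}$, $g\equiv 4/5$, $G\equiv 16/25$; then $g$ solves \eqref{prod}, but $\tilde g\equiv -4/5$ yields left-hand side $16/25$ in \eqref{tildeG} against $\tilde G\equiv -16/25$. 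You should know that the paper's own proof commits exactly the slip you flagged: it writes $\prod_{k}\tilde\Psi_k(\tilde g^{\,k}(\tilde\psi_k(x)))=-\prod_{k}\Psi_k(g^{k}(\psi_k(-x)))$, silently replacing $(-1)^n$ by $-1$. Your instinct that the reflection's orientation-reversal is ``where the real care is required'' was correct; the honest outcome of that care, however, is not a cancellation but the conclusion that the proposition as stated cannot be proved and needs reformulating (for instance, a parity hypothesis on $n$, or on how many of the $\Psi_k$ are negative-valued).
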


\begin{proof}
	Let  $g$ be a solution of \eqref{prod} in $ \mathcal{X'}$. Then clearly
 $\tilde{g}\in \mathcal{\tilde{X'}} \subseteq \mathcal{C}(\tilde{J},\tilde{J})$, and $\tilde{\Psi}_k\in \mathcal{\tilde{Y}}_k\subseteq \mathcal{C}(\tilde{J},\mathbb{R}_+)$
  and   $\tilde{\psi}_k\in \mathcal{\tilde{Z}}_k \subseteq \mathcal{C}(\tilde{J},\tilde{J})$ for all $1\le k\le n$. Also, for each $x\in \tilde{J}$ and $k\in \{1,2,\ldots, n\}$, we have $\tilde{G}(x)=-G(-x)$, $\tilde{g}^k(x)=-g^k(-x)$, $\tilde{\Psi}_k(x)=-\Psi_k(-x)$ and $\tilde{\psi}_k(x)=-\psi_k(-x)$.
 Therefore
 	\begin{eqnarray*}
		\prod_{k=1}^{n}\tilde{\Psi}_k(\tilde{g}^k(\tilde{\psi}_k(x)))=-\prod_{k=1}^{n}\Psi_k(g^k(\psi_k(-x)))=-G(-x)=\tilde{G}(x)
	\end{eqnarray*}
	for each $x\in \tilde{J}$, implying that $\tilde{g}$ is a solution of \eqref{tildeG} on $\tilde{J}$. The converse follows
	similarly. Now, in order
	to prove the uniqueness, assume that \eqref{prod} has a unique solution in $\mathcal{X'}$ and suppose that
	$\tilde{g}_1$, $\tilde{g}_2$ are any two solutions of \eqref{tildeG} in $\tilde{\mathcal{X'}}$. Then, by the ``if'' part of what we have proved
	above, there exist solutions $g_1$ and $g_2$ of \eqref{prod} in $\mathcal{X'}$ such that $\tilde{g}_1=h^{-1}\circ g_1\circ h$  and $\tilde{g}_2=h^{-1}\circ g_2\circ h$. By our assumption, we have $g_1=g_2$ and therefore $\tilde{g}_1=\tilde{g}_2$. The proof
	of the converse is similar.
\end{proof}


Although the above discussion is about continuous solutions, we can employ a similar approach of using logarithmic conjugacy to discuss smooth ($C^r$ for $r=1$ or larger) solutions of \eqref{prod} using those for \eqref{sum} given in \cite{Mu-Su-2007, Si-Zhang,zhang1990} whenever $0\notin J$.

{\bf Zero Problem:}
{\it
If $0\in J$, then the current approach of using	the logarithmic conjugacy, as used in  Theorem \ref{C-existance}, is not applicable for solving \eqref{prod} because $\log 0$ is not a well-defined real number}.
%
%

The Zero Problem occurs with continuity. It can be avoided if we discuss \eqref{prod} with weaker regularity (semi-continuity and integrability),
for which we need to use another method.


\section{Order-preserving solutions}\label{S2}

As indicated at the end of the above section, in order to deal with the Zero Problem,
we will consider solutions without continuity or with a `weak version' of continuity. As considering monotonicity in $\mathcal{G}(J;\delta,M)$ and $\mathcal{F}(J;\delta,M)$ just before (\ref{G-F}),
we start with orientation-preserving solutions, ignoring continuity.
We need the following preliminaries on complete lattices for our discussion in this section and the one that follow.


As defined in \cite{Szasz}, a relation $\preceq$ on a non-empty set $X$ is
called a {\it partial order} if it is
reflexive (i.e., $x\preceq x$ for all $x\in X$),
antisymmetric (i.e., $x=y$ whenever $x\preceq y$ and $y\preceq x$ in $X$),
and
transitive (i.e., $x\preceq z$ whenever  $x\preceq y$ and $y\preceq z$ in $X$).
$X$ endowed with a partial order $\preceq$
is called a {\it partially ordered set} (or simply a {\it poset}).
For a subset $E$ of the poset $X$,
$b\in X$ is called an {\it upper bound} (resp. a {\it lower bound}) of $E$
if $x\preceq b$ (resp. $b\preceq x$) for all $x\in E$.
Further, $b$ is called the
{\it least upper bound} or {\it supremum} (resp. {\it greatest lower bound} or {\it infimum}),
denoted by $\sup_X E$ (resp. $\inf_X E$),
if $b$ is an  upper bound (resp.  lower bound) of $E$ and
every upper bound (resp. lower bound) $z$ of $E$ satisfies
$b\preceq z$ (resp. $z\preceq b$).
A poset $X$ is called a {\it lattice} if $\sup_X\{x,y\}$, $\inf_X\{x,y\}\in X$ for every $x,y \in X$.
$X$ being a lattice in the partial order $\preceq$ is said to be {\bf (i)} {\it join-complete}
if $\sup_X E \in X$ for every non-empty subset $E$ of $X$; {\bf (ii)} {\it meet-complete} if $\inf_X E \in X$
for every non-empty subset $E$ of $X$; {\bf (iii)} {\it complete} if $X$ is both join- and meet-complete.
$X$ is said
to be {\it simply ordered} (or a {\it chain}) if at least one of the relations $x\preceq y$ and $y\preceq x$ hold whenever $x,y \in X$.
Further,
a non-empty
subset $E$ of $X$ is said to be {\bf (i)} a {\it sublattice} of $X$ if
$\sup_X\{x,y\}$, $\inf_X\{x,y\}\in E$ for every $x,y \in E$; {\bf (ii)}
{\it convex} if $\{z\in X: x\preceq z \preceq y\}\subseteq E$ whenever $x \preceq y$ in $E$;
{\bf (iii)} a {\it complete sublattice} of $X$ if
$\sup_X Y$ and $\inf_X Y$ exist, and both are in $E$ for every non-empty subset $Y$ of $E$.
For convenience, we use  $(X, \preceq)$ to denote a lattice $X$ in the partial order $\preceq$.

A map $g:X\to X'$, where $(X, \preceq)$ and $(X', \preceq')$ are lattices, is said to be {\it order-preserving} (resp. {\it strictly order-preserving}) if $g(x)\preceq' g(y)$ (resp. $g(x)\prec' g(y)$) in $X'$ whenever  $x\preceq y$ (resp. $x\prec y$) in $X$.
Let $\mathcal{G}(X,X')$  and $\mathcal{G}_{op}(X,X')$ denote the poset of all maps  and order-preserving maps of $X$ into $X'$
respectively in the {\it pointwise partial order} $\trianglelefteq$ defined by $g_1\trianglelefteq g_2$ if $g_1(x)\preceq' g_2(x)$ for all $x\in X$. For convenience, we denote $\mathcal{G}(X,X)$  (resp. $\mathcal{G}_{op}(X,X)$) by $\mathcal{G}(X)$  (resp. $\mathcal{G}_{op}(X)$).
As in \cite{Glazowska},
for $g_1, g_2\in \mathcal{G}(X)$, we say that $g_1$ {\it subcommutes} with $g_2$
if
$
g_1\circ g_2 \trianglelefteq g_2\circ g_1.
$

\begin{lm}\label{Lm1}{\rm(\cite{GZ2021})}
	The following assertions are true
for a lattice $(X,\preceq)$:
	\begin{description}
		\item[(i)] Both $\mathcal{G}(X)$ and $\mathcal{G}_{op}(X)$ are lattices in the partial order $\trianglelefteq$.
		
		\item[(ii)] If $X$ is a
		complete lattice, then $(\mathcal{G}_{op}(X), \trianglelefteq)$ is also a complete lattice.
		
		\item[(iii)] If $g \in \mathcal{G}_{op}(X)$, then $g^k\in \mathcal{G}_{op}(X)$ for each $k\in \mathbb{N}$.
		
		\item[(iv)] If $g_1, g_2 \in \mathcal{G}_{op}(X)$ such that $g_1\trianglelefteq g_2$, then
		$g_1^k\trianglelefteq g_2^k$ for each $k\in \mathbb{N}$.
		
		
		\item[(v)] If $g_1, g_2 \in \mathcal{G}_{op}(X)$ such that $g_1$ subcommutes with $g_2$ and $g_1(x)\preceq g_2(x)$, then $g_1^k(x)\preceq g_2^k(x)$ for each $k\in \mathbb{N}$.
	\end{description}
\end{lm}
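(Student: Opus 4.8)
The plan is to handle the five assertions in three groups, since (i)--(ii) concern the lattice structure of function spaces while (iii)--(v) are iteration facts established by induction. For (i) and (ii) the unifying idea is that joins and meets in the pointwise order $\trianglelefteq$ are computed pointwise. Given $g_1, g_2 \in \mathcal{G}(X)$, I would define the candidate join and meet by $h(x) := \sup_X\{g_1(x), g_2(x)\}$ and $\ell(x) := \inf_X\{g_1(x), g_2(x)\}$, which are well defined because $X$ is a lattice. A direct check shows $h = \sup\{g_1, g_2\}$ and $\ell = \inf\{g_1, g_2\}$ in $(\mathcal{G}(X), \trianglelefteq)$: any $g$ with $g_1, g_2 \trianglelefteq g$ satisfies $g_i(x) \preceq g(x)$, so $g(x)$ is an upper bound of $\{g_1(x), g_2(x)\}$ and hence $h(x) \preceq g(x)$. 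To upgrade to $\mathcal{G}_{op}(X)$ I must verify that $h, \ell$ are order-preserving when $g_1, g_2$ are: if $x \preceq y$ then $g_i(x) \preceq g_i(y) \preceq h(y)$ for $i=1,2$, so $h(y)$ is an upper bound of $\{g_1(x), g_2(x)\}$ and thus $h(x) \preceq h(y)$, with $\ell$ symmetric. For (ii), assuming $X$ complete, I would repeat this construction for an arbitrary non-empty family $\mathcal{E} \subseteq \mathcal{G}_{op}(X)$, setting $H(x) := \sup_X\{g(x): g \in \mathcal{E}\}$ and $L(x) := \inf_X\{g(x): g \in \mathcal{E}\}$, which exist pointwise by completeness; order-preservation and the supremum/infimum property follow exactly as above, giving join- and meet-completeness.

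Assertions (iii) and (iv) are routine inductions on $k$. For (iii), the composition of two order-preserving maps is order-preserving, so writing $g^{k+1} = g \circ g^k$ and invoking the inductive hypothesis closes the induction. For (iv), assuming $g_1^k \trianglelefteq g_2^k$, I would chain $g_1^{k+1}(x) = g_1(g_1^k(x)) \preceq g_1(g_2^k(x)) \preceq g_2(g_2^k(x)) = g_2^{k+1}(x)$, where the first inequality uses that $g_1$ is order-preserving together with $g_1^k(x) \preceq g_2^k(x)$, and the second uses the global relation $g_1 \trianglelefteq g_2$ at the point $g_2^k(x)$.

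The main obstacle is (v), because there the hypothesis $g_1(x) \preceq g_2(x)$ is assumed only at the single point $x$, so the clean substitution of (iv) is unavailable and the subcommuting relation must be exploited to transport the inequality through the iterates. The key intermediate step I would isolate is that $g_1$ subcommutes not merely with $g_2$ but with every power, namely $g_1 \circ g_2^k \trianglelefteq g_2^k \circ g_1$ for all $k$. This I would prove by induction: composing $g_1 \circ g_2 \trianglelefteq g_2 \circ g_1$ on the right with $g_2^k$ gives $g_1 \circ g_2^{k+1} \trianglelefteq g_2 \circ (g_1 \circ g_2^k)$, and applying the order-preserving map $g_2$ to the inductive hypothesis $g_1 \circ g_2^k \trianglelefteq g_2^k \circ g_1$ gives $g_2 \circ (g_1 \circ g_2^k) \trianglelefteq g_2^{k+1} \circ g_1$, so that chaining yields the claim. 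Granting this, the induction for (v) closes via
\[
g_1^{k+1}(x) = g_1(g_1^k(x)) \preceq g_1(g_2^k(x)) \preceq g_2^k(g_1(x)) \preceq g_2^k(g_2(x)) = g_2^{k+1}(x),
\]
where the first inequality uses that $g_1$ is order-preserving together with the inductive hypothesis $g_1^k(x) \preceq g_2^k(x)$, the second is the subcommuting-for-powers fact evaluated at $x$, and the third applies the order-preserving map $g_2^k$ (by (iii)) to the pointwise hypothesis $g_1(x) \preceq g_2(x)$. This localization step — proving a global subcommuting statement for all powers in order to carry the one-point inequality $g_1(x) \preceq g_2(x)$ through $k$ iterations — is the crux, and I expect it to require the most care.
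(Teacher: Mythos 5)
The paper never proves this lemma: it is imported verbatim from the preprint \cite{GZ2021}, so there is no in-paper argument to compare against, and your proposal has to stand on its own. It does. Parts (i)--(ii) are handled correctly by the pointwise construction: $h(x)=\sup_X\{g_1(x),g_2(x)\}$ is an upper bound of $\{g_1,g_2\}$ in $\trianglelefteq$, and any upper bound $g$ dominates it pointwise, so it is the join; your check that $h$ and $\ell$ inherit order-preservation is exactly what is needed for $\mathcal{G}_{op}(X)$ to be a (complete, in case (ii)) lattice, and note that the paper's definition of completeness only quantifies over non-empty subsets, so your restriction to non-empty families costs nothing. Parts (iii)--(iv) are the routine inductions you describe. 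The substantive content is (v), and your treatment is sound: the auxiliary claim $g_1\circ g_2^k \trianglelefteq g_2^k\circ g_1$ follows by induction because right-composition with an arbitrary map preserves the pointwise order $\trianglelefteq$ (no monotonicity needed there), while left-composition with the order-preserving $g_2$ preserves it as well; and the chain
\[
g_1^{k+1}(x)=g_1(g_1^k(x))\preceq g_1(g_2^k(x))\preceq g_2^k(g_1(x))\preceq g_2^k(g_2(x))=g_2^{k+1}(x)
\]
uses, in order, the inductive hypothesis with $g_1$ order-preserving, the subcommuting-for-powers claim at the single point $x$, and (iii) applied to the pointwise hypothesis $g_1(x)\preceq g_2(x)$. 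You correctly identified that (v) cannot be reduced to (iv) because the hypothesis is only at one point, and the localization-through-powers lemma is the right fix; this is a complete and correct proof of the cited result.
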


\begin{lm}{\rm (Knaster-Tarski \cite{Knaster,Tarski})}\label{L0}
	Let $(X, \preceq)$ be a complete lattice and $g$ an order-preserving self-map on $X$.
	Then the set of all fixed points of  $g$ is a non-empty complete sublattice of $X$.
	Furthermore,
	$g$ has the minimum fixed point $x_*$ and the maximum fixed point $x^*$ in $X$ given by
	$x_*=\inf\{x\in X: g(x)\preceq x\}$ and
	$x^*=\sup\{x\in X: x\preceq g(x)\}$.
\end{lm}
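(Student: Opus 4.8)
The plan is to exhibit the two extremal fixed points explicitly via the formulas in the statement, and then to build the lattice structure on the fixed-point set by restricting $g$ to suitable order-intervals. Since $X$ is complete it has a top element $\top=\sup_X X$ and a bottom element $\bot=\inf_X X$, which I will use to guarantee that the auxiliary sets below are non-empty.

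First I would settle existence together with the two formulas. Put $Q=\{x\in X: g(x)\preceq x\}$ and $P=\{x\in X: x\preceq g(x)\}$; then $\top\in Q$ and $\bot\in P$, so both are non-empty and $x_*:=\inf_X Q$, $x^*:=\sup_X P$ exist by completeness. For the minimum, take any $x\in Q$: from $x_*\preceq x$ and order-preservation I get $g(x_*)\preceq g(x)\preceq x$, so $g(x_*)$ is a lower bound of $Q$ and hence $g(x_*)\preceq x_*$, i.e. $x_*\in Q$. Applying $g$ once more gives $g(g(x_*))\preceq g(x_*)$, so $g(x_*)\in Q$ and therefore $x_*\preceq g(x_*)$; antisymmetry forces $g(x_*)=x_*$. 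Any fixed point $y$ satisfies $g(y)\preceq y$, so $y\in Q$ and $x_*\preceq y$, proving $x_*$ is the least fixed point. The dual argument on $P$, with all inequalities reversed, yields $g(x^*)=x^*$ and that $x^*$ is the greatest fixed point.

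Next I would show that the fixed-point set $\mathrm{Fix}(g)$, ordered by the restriction of $\preceq$, is a complete lattice. Given a non-empty $Y\subseteq\mathrm{Fix}(g)$, set $s=\sup_X Y$. Since each $y\in Y$ is fixed and $y\preceq s$, I have $y=g(y)\preceq g(s)$, so $g(s)$ is an upper bound of $Y$ and hence $s\preceq g(s)$. The key step is to restrict $g$ to the interval $[s,\top]=\{x\in X:s\preceq x\}$: this is itself a complete lattice, and $s\preceq g(s)$ together with monotonicity shows that $x\in[s,\top]$ implies $s\preceq g(s)\preceq g(x)$, so $g$ maps $[s,\top]$ into itself. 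Applying the first part to $g|_{[s,\top]}$ produces its least fixed point $\bar s$, which is a genuine fixed point of $g$ dominating every $y\in Y$; and any fixed point $z$ bounding $Y$ above satisfies $z\succeq s$, hence lies in $[s,\top]$ and obeys $z\succeq\bar s$. Thus $\bar s$ is the supremum of $Y$ inside $\mathrm{Fix}(g)$, and the dual construction on $[\bot,\,\inf_X Y]$ supplies the infimum.

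The hard part will be exactly this last point, and it is where care is essential. It is tempting to claim that $\sup_X Y$ is already fixed, but in general it is only a post-fixed point ($s\preceq g(s)$ may be strict), so $\mathrm{Fix}(g)$ need not be closed under the ambient suprema $\sup_X$; the supremum of $Y$ within $\mathrm{Fix}(g)$ is the \emph{least fixed point lying above} $\sup_X Y$, not $\sup_X Y$ itself. Once the interval-restriction device is in place, completeness of $\mathrm{Fix}(g)$ follows by feeding each restricted map back into the extremal-fixed-point construction, and the remaining verifications (that $\bar s$ is genuinely the least upper bound in $\mathrm{Fix}(g)$, and the dual statement for infima) are routine order-theoretic checks that I would not grind through here.
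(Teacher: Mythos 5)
Your proof is correct. The paper in fact gives no proof of this lemma at all --- it is quoted from \cite{Knaster,Tarski}, with \cite{subra2000} and \cite{Gratzer1978} indicated for its two halves --- so the relevant comparison is with the classical Tarski argument in those sources, and that is exactly the argument you give: the extremal fixed points via $x_*=\inf\{x\in X: g(x)\preceq x\}$ and $x^*=\sup\{x\in X: x\preceq g(x)\}$, then completeness of $\mathrm{Fix}(g)$ by restricting $g$ to the order-interval above $\sup_X Y$ and re-running the extremal-fixed-point construction there.

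One substantive point, which cuts in your favor rather than against you. What your argument establishes --- and what Tarski's theorem actually says --- is that $\mathrm{Fix}(g)$ is a complete lattice \emph{in the induced order}. It is not, in general, a complete sublattice of $X$ in the sense this paper defines in Section 3 (item {\bf (iii)}: $\sup_X Y$ and $\inf_X Y$ lie in $E$ for every non-empty $Y\subseteq E$). Your closing caveat identifies exactly the obstruction: for $Y\subseteq\mathrm{Fix}(g)$ the ambient supremum $s=\sup_X Y$ is only a post-fixed point, and the join of $Y$ inside $\mathrm{Fix}(g)$ is the least fixed point above $s$, which can be strictly larger. Concretely, take $X=\{0,a,b,c,1\}$ with $0\prec a\prec c\prec 1$, $0\prec b\prec c$, and $a,b$ incomparable, and let $g$ fix $0,a,b,1$ while $g(c)=1$; then $g$ is order-preserving and $\sup_X\{a,b\}=c\notin\mathrm{Fix}(g)$. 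So the lemma as stated here, read with the paper's own definition of complete sublattice, is strictly stronger than the cited theorem and is in fact false; your proof establishes the correct version, and the flag you raise is not a gap in your argument but an imprecision in the statement. (The same caveat applies to the phrase ``complete sublattice'' in Theorems \ref{Thm1} and \ref{Thm2}, whose proofs invoke this lemma and therefore also only yield that the solution set is a complete lattice in its induced order.)
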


The first part of this lemma can also be found in the expository article \cite{subra2000}.
A part of the second, showing that $\sup\{x\in X: x\preceq g(x)\}$ and $\inf\{x\in X: g(x)\preceq x\}$ are fixed points of $g$ thereby proving the existence of a fixed point, can also be found in the book \cite{Gratzer1978}.

Having the above preliminaries, we will now discuss order-preserving
solutions of \eqref{prod} on compact intervals in $\mathbb{R}$,
which will serve as tools for our subsequent discussion of
 semi-continuous solutions and integrable solutions in Sections \ref{S4} and \ref{sec6}, respectively.
Henceforth,
for the entirety of this section and the one that follows,
let $X$ be a compact interval $J:=[c,d]$ of $\mathbb{R}$ such that $d>\max\{0,c\}$, which is also a simply ordered complete
lattice in the usual order $\le$. For each $\delta>0$ such that $c\le\delta\le d$, let
\begin{align*}
	\mathcal{G}(J;\delta)&:=\{g\in \mathcal{G}(J):g(x)\ge \delta~\text{for all}~x\in J\},\\
	\mathcal{G}_{op}(J;\delta)&:=\{g\in \mathcal{G}_{op}(J):g(x)\ge \delta~\text{for all}~x\in J\}.
\end{align*}


\begin{thm}\label{Thm1}
Let $\delta>0$ such that $c\le\delta\le d$, and let $\lambda>0$, $\lambda_1\le 1$, $\lambda_k\le 0$ for $2\le k\le n$ such that $\sum_{k=1}^{n}\lambda_k=\lambda$.
Further, let
$\psi_1= {\rm id}$ on $J$ and
$\psi_k\in \mathcal{G}_{op}(J)$ for $2\le k\le n$,
and
let
$\Psi_1 \in \mathcal{G}(J, \mathbb{R})$ such that $\Psi_1(\cdot)=({\rm id}(\cdot))^{\lambda_1}$ on $[\delta,d]$,
and
$\Psi_k \in \mathcal{G}(J, \mathbb{R})$ such that $\Psi_k(\cdot)=(\Xi_k(\cdot))^{\lambda_k}$ for some $\Xi_k\in \mathcal{G}_{op}([\delta,d])$ for $2\le k\le n$.
Then
	the set $\mathcal{S}_{op}(J;\delta)$ of all solutions of  \eqref{prod} in $\mathcal{G}_{op}(J;\delta)$
	is a non-empty complete sublattice of $\mathcal{G}_{op}(J;\delta)$
if
$G\in \mathcal{G}_{op}(J;\delta)$ satisfies
$G(c)\ge \delta^\lambda$ and $ G(d)\le d^\lambda$.
Moreover,
	\eqref{prod} has the minimum solution $g_*$ and the maximum solution $g^*$ in $\mathcal{G}_{op}(J;\delta)$  given by
	\begin{eqnarray*}
		&&g_*=\inf\left\{g\in \mathcal{G}_{op}(J;\delta):   G\trianglelefteq  	\prod_{k=1}^{n}\Psi_k\circ g^k\circ \psi_k\right\},
		\\
		&&g^*=\sup\left\{g\in \mathcal{G}_{op}(J;\delta): \prod_{k=1}^{n}\Psi_k\circ g^k\circ \psi_k \trianglelefteq  G\right\}.
	\end{eqnarray*}
\end{thm}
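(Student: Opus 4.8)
The plan is to recast \eqref{prod} as a fixed-point equation for an order-preserving self-map of the complete lattice $\mathcal{G}_{op}(J;\delta)$ and then invoke Knaster--Tarski (Lemma \ref{L0}). The structural feature that makes this possible is that the $k=1$ factor can be inverted: since $\psi_1={\rm id}$ and $\Psi_1(\cdot)=(\cdot)^{\lambda_1}$ on $[\delta,d]$, and since the hypotheses force $\lambda_1\ge\lambda>0$ (because $\lambda_k\le 0$ for $k\ge 2$ while $\sum_{k}\lambda_k=\lambda$), I can solve \eqref{prod} for $g(x)$ and define
\begin{equation*}
(Tg)(x):=\left(G(x)\prod_{k=2}^{n}\bigl(\Xi_k(g^k(\psi_k(x)))\bigr)^{-\lambda_k}\right)^{1/\lambda_1},\qquad x\in J.
\end{equation*}
For $g\in\mathcal{G}_{op}(J;\delta)$ every argument $g^k(\psi_k(x))$ lies in $[\delta,d]$ (iterating $g\ge\delta$ with $g(J)\subseteq J$), so the power-law forms $\Psi_1(\cdot)=(\cdot)^{\lambda_1}$ and $\Psi_k(\cdot)=(\Xi_k(\cdot))^{\lambda_k}$ legitimately apply on the whole range of the iterates, and a fixed point of $T$ is exactly a solution of \eqref{prod} in $\mathcal{G}_{op}(J;\delta)$.

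First I would record that $\mathcal{G}_{op}(J;\delta)$ is a complete lattice: by Lemma \ref{Lm1}(ii) $\mathcal{G}_{op}(J)$ is complete, its suprema and infima are taken pointwise, and the pointwise supremum/infimum of a family of maps each $\ge\delta$ is again order-preserving and $\ge\delta$; hence $\mathcal{G}_{op}(J;\delta)$ is a complete sublattice (nonempty, containing the constants $\delta$ and $d$). Next I would verify that $T$ maps $\mathcal{G}_{op}(J;\delta)$ into itself. Monotonicity of $Tg$ follows since $g^k$ is order-preserving (Lemma \ref{Lm1}(iii)), $\psi_k$ and $\Xi_k$ are order-preserving, and $t\mapsto t^{-\lambda_k}$ is nondecreasing for $t>0$ as $-\lambda_k\ge 0$; thus the bracket is a product of nonnegative nondecreasing factors and raising to the positive power $1/\lambda_1$ preserves monotonicity. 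For the range it suffices, by monotonicity, to control the endpoints: using $\Xi_k(g^k(\psi_k(d)))\le d$, the exponent identity $\sum_{k=2}^{n}(-\lambda_k)=\lambda_1-\lambda$, and $G(d)\le d^{\lambda}$ one gets $(Tg)(d)\le (d^{\lambda}d^{\lambda_1-\lambda})^{1/\lambda_1}=d$, while $\Xi_k(g^k(\psi_k(c)))\ge\delta$ together with $G(c)\ge\delta^{\lambda}$ gives $(Tg)(c)\ge(\delta^{\lambda}\delta^{\lambda_1-\lambda})^{1/\lambda_1}=\delta$. Hence $\delta\le (Tg)(x)\le d$ for all $x$, so $Tg\in\mathcal{G}_{op}(J;\delta)$.

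Then I would show $T$ is order-preserving for $\trianglelefteq$: if $g_1\trianglelefteq g_2$ then $g_1^k\trianglelefteq g_2^k$ by Lemma \ref{Lm1}(iv), whence $\Xi_k(g_1^k(\psi_k(x)))\le\Xi_k(g_2^k(\psi_k(x)))$, and raising to $-\lambda_k\ge0$, multiplying by $G(x)>0$, and taking the $1/\lambda_1$-power yields $Tg_1\trianglelefteq Tg_2$. Lemma \ref{L0} then applies to the complete lattice $\mathcal{G}_{op}(J;\delta)$ and the order-preserving $T$, so the fixed-point set, which is exactly $\mathcal{S}_{op}(J;\delta)$, is a nonempty complete sublattice with least element $g_*=\inf\{g:Tg\trianglelefteq g\}$ and greatest element $g^*=\sup\{g:g\trianglelefteq Tg\}$. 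Finally I would translate these conditions: raising to the positive power $\lambda_1$ and moving the product to the other side shows that $Tg\trianglelefteq g$ is equivalent to $G\trianglelefteq\prod_{k=1}^{n}\Psi_k\circ g^k\circ\psi_k$, and $g\trianglelefteq Tg$ to the reverse inequality, giving precisely the stated formulas for $g_*$ and $g^*$.

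The step I expect to be the main obstacle is the self-map (invariance) verification rather than the monotonicity or the lattice-theoretic machinery: one must confirm that the arguments of $\Psi_1$ and the $\Xi_k$ never leave $[\delta,d]$, so that the assumed power-law forms are valid throughout, and that the two endpoint estimates close up exactly. Both points hinge on the precise exponent bookkeeping through $\sum_{k=1}^{n}\lambda_k=\lambda$ and on the boundary hypotheses $G(c)\ge\delta^{\lambda}$ and $G(d)\le d^{\lambda}$; without these the power $1/\lambda_1$ could drive $(Tg)(c)$ below $\delta$ or $(Tg)(d)$ above $d$, and $T$ would fail to preserve $\mathcal{G}_{op}(J;\delta)$.
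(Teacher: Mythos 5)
Your proposal is correct and follows the paper's strategy in all structural respects: recast \eqref{prod} as a fixed-point problem for an order-preserving self-map $T$ of the complete lattice $(\mathcal{G}_{op}(J;\delta),\trianglelefteq)$, apply the Knaster--Tarski theorem (Lemma \ref{L0}), and then translate $Tg\trianglelefteq g$ and $g\trianglelefteq Tg$ into the two displayed inequalities. The one genuine difference is the shape of the operator. You invert the first factor outright, defining
$Tg=\bigl(G\cdot\prod_{k=2}^{n}(\Xi_k\circ g^k\circ\psi_k)^{-\lambda_k}\bigr)^{1/\lambda_1}$,
which rests on your (correct) observation that $\lambda_1\ge\lambda>0$, and you then secure invariance of $\mathcal{G}_{op}(J;\delta)$ through the endpoint estimates at $c$ and $d$ together with monotonicity of $Tg$. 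The paper instead keeps $g$ on both sides: it sets $H:=G^{1/\lambda}$ and
$Tg=g^{1-\lambda_1}\cdot\prod_{k=2}^{n}(\Xi_k\circ g^k\circ\psi_k)^{-\lambda_k}\cdot H^{\lambda}$,
where the exponents $1-\lambda_1$, $-\lambda_k$, $\lambda$ are nonnegative and sum to $1$, so $Tg(x)$ is a weighted geometric mean of points of $[\delta,d]$ and invariance is automatic at every $x$, the endpoint hypotheses on $G$ being used only to check $H\in\mathcal{G}_{op}(J;\delta)$. The two operators have the same fixed points but differ off the fixed-point set: yours is the more direct ``solve for $g$'' form (at the cost of extracting the root $1/\lambda_1$ and doing the endpoint bookkeeping), while the paper's convex-exponent form makes the self-map verification a one-line exponent count and also transfers verbatim to the semi-continuous setting of Theorem \ref{Thm2}. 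The remaining steps (completeness of $\mathcal{G}_{op}(J;\delta)$, order-preservation of $T$ via Lemma \ref{Lm1}(iii)--(iv), and the extremal fixed-point formulas) coincide with the paper's.
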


\begin{proof}
	For each $H \in \mathcal{G}_{op}(J;\delta)$, we first see that solving \eqref{prod} for the map $G(x)= (H(x))^\lambda$ can be simplified to a fixed point problem.
	
	\noindent {\it Step 1.}
	Prove that $g$ is a solution of the equation
	\begin{eqnarray} \label{1}
		\prod_{k=1}^{n}\Psi_k(g^k(\psi_k(x)))=(H(x))^\lambda
	\end{eqnarray}
	in $\mathcal{G}(J;\delta)$ if and only if it is a fixed point
	of the operator $T:\mathcal{G}(J;\delta)\to \mathcal{G}(J;\delta)$ defined  by
	\begin{eqnarray}\label{T}
		Tg(x)=(g(x))^{\alpha_1}\cdot \left(\prod_{k=2}^{n}(\Xi_k(g^k(\psi_k(x))))^{\alpha_k}\right)\cdot(H(x))^\alpha
	\end{eqnarray}
	where 
	$\alpha=\lambda$, $\alpha_1=1-\lambda_1$ and $\alpha_k=-\lambda_k$ for $2\le k\le n$.

	Using the assumptions on $\lambda$ and $\lambda_k$'s, we have
	\begin{eqnarray}\label{alphak ineuality}
		\alpha>0,\quad \alpha_k\ge 0~~ \text{for}~~1\le k\le n, \quad\text{and}\quad  \sum_{k=1}^{n}\alpha_k+ \alpha=1.
	\end{eqnarray}
Also, it is clear from the assumptions on the maps $\Xi_k$ and $\psi_k$  that $T$ is a well-defined map of $\mathcal{G}(J;\delta)$ into $\mathcal{G}(J)$. Further, for each $g\in \mathcal{G}(J;\delta)$ and  $x\in J$, we have
	\begin{eqnarray*}\label{02}
		c\le \delta= \delta^{\sum_{k=1}^{n}\alpha_k+\alpha}\le (g(x))^{\alpha_1}\cdot \left(\prod_{k=2}^{n}(\Xi_k(g^k(\psi_k(x))))^{\alpha_k}\right)\cdot(H(x))^\alpha\le d^{\sum_{k=1}^{n}\alpha_k+\alpha}=d,
	\end{eqnarray*}
	i.e.,
	$c\le \delta \le Tg(x) \le d$, proving that $Tg \in \mathcal{G}(J;\delta)$ for each $g\in \mathcal{G}(J;\delta)$. Therefore
	$T$ is a self-map of $\mathcal{G}(J;\delta)$.

	Let $g$ be a solution of \eqref{1} in $\mathcal{G}(J;\delta)$.
	Then, by \eqref{T} we have
	\begin{align*}
		Tg(x)&=(g(x))^{1-\lambda_1}\cdot\left(\prod_{k=2}^{n}(\Xi_k(g^k(\psi_k(x))))^{-\lambda_k}\right)\cdot(H(x))^\lambda
		\\
		&= g(x)\cdot\left((g(x))^{\lambda_1}\cdot \prod_{k=2}^{n}(\Xi_k(g^k(\psi_k(x))))^{\lambda_k}\right)^{-1}\cdot(H(x))^\lambda \\
			&= g(x)\cdot\left(\prod_{k=1}^{n}\Psi_k(g^k(\psi_k(x)))\right)^{-1}\cdot(H(x))^\lambda \\
		&=g(x)\cdot ((H(x))^\lambda)^{-1}\cdot (H(x))^\lambda\\
		&=g(x)
	\end{align*}
	for each $x\in J$,	implying that $g$ is a fixed point of $T$. This proves the ``only if'' part. To prove the ``if'' part, suppose that $g$ is a fixed point of $T$ in $\mathcal{G}(J;\delta)$. Then
	\begin{align*}
	\prod_{k=1}^{n}\Psi_k(g^k(\psi_k(x)))
	&= \Psi_1(g(\psi_1(x))) \cdot \prod_{k=2}^{n}\Psi_k(g^k(\psi_k(x)))\\
		&=(g(x))^{\lambda_1}\cdot \prod_{k=2}^{n}(\Xi_k(g^k(\psi_k(x))))^{\lambda_k}\\
		&=(g(x))^{1-\alpha_1}\cdot\left(\prod_{k=2}^{n}(\Xi_k(g^k(\psi_k(x))))^{-\alpha_k}\right)
		\\
		&= g(x)\cdot\left((g(x))^{\alpha_1}\cdot \prod_{k=2}^{n}(\Xi_k(g^k(\psi_k(x))))^{\alpha_k}\cdot (H(x))^\alpha\right)^{-1}\cdot(H(x))^\alpha \\
		&=g(x)\cdot (g(x))^{-1}\cdot (H(x))^\alpha\\
		&=(H(x))^\lambda,
	\end{align*}
	implying that $g$ is a solution of \eqref{1}.

	We now prove in the following three steps that
	the set of all solutions of \eqref{1}  in $\mathcal{G}_{op}(J;\delta)$ is a non-empty complete sublattice of $\mathcal{G}_{op}(J;\delta)$.

	\noindent
	{\it Step 2.} Construct an order-preserving map $T:\mathcal{G}_{op}(J;\delta) \to \mathcal{G}_{op}(J;\delta)$.
	
	Define a map $T$ on  $\mathcal{G}_{op}(J;\delta)$ as in \eqref{T},
	where $\alpha$ and $\alpha_k$'s are chosen as in Step 1.
	Then, by using the assumptions on $\lambda$ and $\lambda_k$'s, we see that $\alpha$ and $\alpha_k$'s satisfy \eqref{alphak ineuality}.
	Further, by a similar argument as in Step 1, it follows that $Tg$ is a self-map on $J$ and $Tg(x)\ge \delta$ for all $x\in J$.
	
		
		Next, to prove that $Tg$ is order-preserving, consider any $x, y\in J$ such that $x\le  y$.  Since $H, g, \Xi_k$ and $\psi_k$ are order-preserving on their domains
		 for $1\le k\le n$, by using result {\bf (iii)} of Lemma \ref{Lm1} we have
		\begin{align*}\label{Tf op}
			Tg(x)&=(g(x))^{\alpha_1}\cdot \left(\prod_{k=2}^{n}(\Xi_k(g^k(\psi_k(x))))^{\alpha_k}\right)\cdot(H(x))^\alpha  \nonumber\\
			&\le (g(y))^{\alpha_1}\cdot \left(\prod_{k=2}^{n}(\Xi_k(g^k(\psi_k(y))))^{\alpha_k}\right)\cdot(H(y))^\alpha  \nonumber\\
			&=Tg(y).
		\end{align*}	
		Therefore $T$ is a self-map of $\mathcal{G}_{op}(J;\delta)$.
		
		Finally, to prove that $T$ is order-preserving,
		consider any $g_1, g_2 \in \mathcal{G}_{op}(J;\delta)$ such that $g_1\trianglelefteq g_2$. Then by result {\bf (iv)}
		of Lemma \ref{Lm1},
we have $g_1^k\trianglelefteq g_2^k$ for $1\le k \le n$.
Therefore, as $\Xi_k$ is order-preserving on $[\delta,d]$ for $1\le k\le n$, we have
		\begin{align*}
			Tg_1(x)&=(g_1(x))^{\alpha_1}\cdot\left(\prod_{k=2}^{n}(\Xi_k(g_1^k(\psi_k(x))))^{\alpha_k}\right)\cdot(H(x))^\alpha  \nonumber\\
			&\le(g_2(x))^{\alpha_1}\cdot \left(\prod_{k=2}^{n}(\Xi_k(g_2^k(\psi_k(x))))^{\alpha_k}\right)\cdot(H(x))^\alpha  \nonumber\\
			&=Tg_2(x)
		\end{align*}
		for each $x\in J$,  i.e., $Tg_1\trianglelefteq Tg_2$. Hence $T$ is order-preserving.
		
		\noindent
		{\it Step 3.} Prove that  $(\mathcal{G}_{op}(J;\delta), \trianglelefteq)$ is a complete lattice.
		
		Consider an arbitrary subset $\mathcal{E}$ of $\mathcal{G}_{op}(J;\delta)$. If $\mathcal{E}=\emptyset$, then the constant map $\phi:J\to J$ defined by $\phi(x)=d$
		is the infimum of $\mathcal{E}$ in $\mathcal{G}_{op}(J;\delta)$. If $\mathcal{E}\ne \emptyset$, then the map $\phi:J\to J$ defined by
		$\phi(x)=\inf\{g(x): g \in \mathcal{E}\}$ is the infimum
		of $\mathcal{E}$ in $\mathcal{G}_{op}(J;\delta)$.  Thus every subset of $\mathcal{G}_{op}(J;\delta)$ has the infimum in $\mathcal{G}_{op}(J;\delta)$. Therefore by Lemma $14$ of \cite{Gratzer1978},
		which says that if every subset of a poset $P$ has the infimum in $P$ then $P$ is complete,
		we get that $\mathcal{G}_{op}(J;\delta)$ is a complete lattice.
		

		\noindent
		{\it Step 4.} Prove that the set of all solutions of \eqref{1}
		in $\mathcal{G}_{op}(J;\delta)$ is a non-empty complete sublattice of $\mathcal{G}_{op}(J;\delta)$.
		
		From Step 1
		we see that $T$ is an order-preserving self-map of the lattice  $\mathcal{G}_{op}(J;\delta)$, which is complete by Step 3.
		Therefore by Lemma \ref{L0},
		the set of all fixed points of $T$ in $\mathcal{G}_{op}(J;\delta)$, and hence by Step 1, the set of all solutions of \eqref{1}  in $\mathcal{G}_{op}(J;\delta)$ is a non-empty complete sublattice of $\mathcal{G}_{op}(J;\delta)$.

		Now, in order to prove our result,	given $G$ as above, let $H(x):=(G(x))^{1/\lambda}$ for all $x\in J$. Then, since $\lambda>0$, clearly $H$ is order-preserving on $J$. Also, we have
		\begin{eqnarray}\label{G}
			\delta \le  (G(c))^\frac{1}{\lambda} \le  (G(x))^\frac{1}{\lambda} \le (G(d))^\frac{1}{\lambda} \le d,\quad \forall x\in J.
		\end{eqnarray}
		Therefore $H\in \mathcal{G}_{op}(J;\delta)$.  This implies by the above part that the
		set of all solutions of \eqref{1},
		and hence that of \eqref{prod}
		in $\mathcal{G}_{op}(J;\delta)$ is a non-empty complete sublattice of $\mathcal{G}_{op}(J;\delta)$.
		
		In particular, \eqref{prod} has the minimum solution $g_*$ and the maximum solution $g^*$ in $\mathcal{G}_{op}(J;\delta)$, which are in fact $\min \mathcal{S}_{op}(J;\delta)$ and $\max \mathcal{S}_{op}(J;\delta)$, respectively.
		Further, by Lemma \ref{L0}, we have $g_*=\inf\{g\in \mathcal{G}_{op}(J;\delta):  Tg \trianglelefteq  g\}$ and $g^*=\sup\{g\in \mathcal{G}_{op}(J;\delta): g\trianglelefteq  Tg\}$, where $H$ is the map defined by $H(x)=(G(x))^{1/\lambda}$ for all $x\in J$.
		This completes the proof.
	\end{proof}
	
	It is worth noting that the result in Step 1 of the above theorem is not true in general if $\Psi_1(\cdot)\ne ({\rm id}(\cdot))^{\lambda_1}$ on $[\delta,d]$
or $\psi_1\ne {\rm id}$ on $J$.
	Further, the reason why we do not assume all $\lambda_k$'s are positive in the above theorem
	is that in that case $Tg$ is not necessarily a self-map of $J$ whenever $g\in \mathcal{G}_{op}(J;\delta)$.
	The following result is devoted to uniqueness of solutions.

\begin{thm}\label{Thm3}
Let $\delta>0$ such that $c\le\delta\le d$. Further, let  $\psi_1=id$ on $J$ and $\psi_k \in \mathcal{G}_{op}(J)$ for $2\le k\le n$, and let $\Psi_1 \in \mathcal{G}(J, \mathbb{R}_+)$  is strictly order-preserving on $[\delta,d]$,  and $\Psi_k \in \mathcal{G}(J,\mathbb{R}_+)$   is  order-preserving on $[\delta,d]$ for $2\le k \le n$. Then 	the following assertions are true for $G \in \mathcal{G}_{op}(J;\delta)$.
		\begin{description}
			\item[(i)] If $g_1, g_2 \in \mathcal{G}_{op}(J;\delta)$ are solutions of \eqref{prod} on $J$ such that 	$g_1\trianglelefteq g_2$, then $g_1=g_2$.
			
			\item[(ii)] If $g_1, g_2 \in \mathcal{G}_{op}(J;\delta)$ are solutions of \eqref{prod} on $J$ such that $g_1\circ g_2=g_2\circ g_1$, then $g_1=g_2$.
		\end{description}
\end{thm}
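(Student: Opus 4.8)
The plan is to handle the two parts by reducing each to a pointwise comparison of the factors of \eqref{prod}, exploiting that $\psi_1={\rm id}$ and that $\Psi_1$ is strictly order-preserving. For part (i), I argue pointwise on $J$. Since $g_1\trianglelefteq g_2$, result (iv) of Lemma \ref{Lm1} gives $g_1^k\trianglelefteq g_2^k$, so $g_1^k(\psi_k(x))\le g_2^k(\psi_k(x))$ for every $x\in J$ and $1\le k\le n$, with all arguments lying in $[\delta,d]$. As each $\Psi_k$ is order-preserving on $[\delta,d]$ and $\mathbb{R}_+$-valued, every factor satisfies $\Psi_k(g_1^k(\psi_k(x)))\le \Psi_k(g_2^k(\psi_k(x)))$; multiplying these positive inequalities gives $G(x)=\prod_{k=1}^{n}\Psi_k(g_1^k(\psi_k(x)))\le \prod_{k=1}^{n}\Psi_k(g_2^k(\psi_k(x)))=G(x)$, so equality must hold factor by factor. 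In particular $\Psi_1(g_1(x))=\Psi_1(g_2(x))$ because $\psi_1={\rm id}$, and since $\Psi_1$ is strictly order-preserving, hence injective, on $[\delta,d]$, this yields $g_1(x)=g_2(x)$ for all $x\in J$, i.e. $g_1=g_2$. This part is routine once positivity of the $\Psi_k$ and the strictness of $\Psi_1$ are invoked.

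For part (ii) the commuting hypothesis plays the role that global comparability played in (i). Since $g_1\circ g_2=g_2\circ g_1$, each of $g_1,g_2$ subcommutes with the other, so result (v) of Lemma \ref{Lm1} applies in both directions and yields the key propagation: for every $y\in J$, the order relation between $g_1(y)$ and $g_2(y)$ is inherited by all iterates, i.e. $g_1(y)\le g_2(y)$ implies $g_1^k(y)\le g_2^k(y)$ for all $k$, and symmetrically; in particular $g_1(y)=g_2(y)$ forces $g_1^k(y)=g_2^k(y)$ for all $k$. Assume $g_1\ne g_2$ and pick $x_0$ with, say, $g_1(x_0)>g_2(x_0)$. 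Writing \eqref{prod} for both solutions at $x_0$, using $\psi_1={\rm id}$, and cancelling the common value $G(x_0)$ against the strictly larger factor $\Psi_1(g_1(x_0))>\Psi_1(g_2(x_0))$, I obtain $\prod_{k=2}^{n}\Psi_k(g_1^k(\psi_k(x_0)))<\prod_{k=2}^{n}\Psi_k(g_2^k(\psi_k(x_0)))$. Hence some $k\ge 2$ satisfies $g_1^k(\psi_k(x_0))<g_2^k(\psi_k(x_0))$, and the propagation above, applied at $\psi_k(x_0)$, rules out $g_1(\psi_k(x_0))\ge g_2(\psi_k(x_0))$; thus the sign of $g_1-g_2$ reverses between $x_0$ and $\psi_k(x_0)$.

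The main obstacle is to upgrade this local sign-reversal into a global contradiction. The difficulty is genuine: commuting order-preserving self-maps of a chain need not be comparable, so the conclusion must really use \eqref{prod}, yet the solutions are only order-preserving (not continuous) and the $\psi_k$ with $k\ge 2$ may displace $x_0$ to an arbitrary point of $J$. I would close the argument by an extremal device: setting $P=\{x:g_1(x)>g_2(x)\}$ and $N=\{x:g_1(x)<g_2(x)\}$, the step above shows that each of $P,N$ is carried into the other by some $\psi_k$, and I would examine a point at which $g_1/g_2$ is maximal in a one-sided-limit sense, which is available because $g_1,g_2$ are monotone, and then show, using the monotonicity of the $\psi_k$ and of the iterates together with the propagation, that the forced reversal cannot occur there. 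Note that whenever the reversal-producing index has $\psi_k={\rm id}$ the contradiction is immediate, since then $g_1^k(x_0)<g_2^k(x_0)$ directly contradicts the propagation at $x_0$; the general case, where $\psi_k(x_0)$ may lie anywhere in $J$, is exactly the delicate point. Once $g_1(x)=g_2(x)$ is forced for all $x$, part (ii) follows.
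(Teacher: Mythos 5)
Your proof of part (i) is correct and is essentially the paper's own argument: Lemma \ref{Lm1}(iv) turns $g_1\trianglelefteq g_2$ into the \emph{global} inequality $g_1^k\trianglelefteq g_2^k$, which is what licenses comparing the factors at the displaced points $\psi_k(x)$; the paper phrases the conclusion as the contradiction \eqref{contr} while you phrase it as factor-by-factor equality plus injectivity of $\Psi_1$ on $[\delta,d]$ --- a cosmetic difference. Part (ii), however, is not proved. Everything after ``I would close the argument by an extremal device'' is a plan, not an argument: from the (correctly derived) fact that each point of $N=\{x\in J: g_1(x)<g_2(x)\}$ is sent by some $\psi_k$ (with $k$ depending on the point) into $P=\{x\in J: g_1(x)>g_2(x)\}$ and vice versa, no contradiction follows without substantial further input --- nothing established so far prevents $P$ and $N$ from exchanging points under the $\psi_k$ indefinitely. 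The proposed device (a point where $g_1/g_2$ is extremal in a one-sided-limit sense) is never executed, and it faces real obstacles: $g_1/g_2$ need not be monotone, its supremum need not be attained for merely order-preserving $g_1,g_2$, and both the index $k$ and the displacement $\psi_k(x)$ produced by your sign-reversal step vary with the point, so extremality at one point controls nothing at its $\psi_k$-image. As submitted, part (ii) has a genuine gap.

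What makes this worth spelling out is that the delicate point you isolated is exactly the step the paper's own proof glosses over. The paper argues: if $g_1(x)<g_2(x)$, then Lemma \ref{Lm1}(v) gives $g_1^k(x)\le g_2^k(x)$ for $2\le k\le n$, ``and therefore we arrive at \eqref{contr}.'' But \eqref{contr} compares $\Psi_k(g_1^k(\psi_k(x)))$ with $\Psi_k(g_2^k(\psi_k(x)))$, i.e., it needs the iterate inequalities at the points $\psi_k(x)$, whereas Lemma \ref{Lm1}(v) is a pointwise statement (otherwise it would not differ from (iv)) and delivers them only at $x$ itself. So the paper's proof of (ii) is complete only when the relevant $\psi_k$ fix $x$ --- e.g., when all $\psi_k={\rm id}$ --- which is precisely the special case you observed is immediate. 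Your diagnosis of where the difficulty lies is therefore more careful than the paper's treatment; but neither your proposal nor the printed proof closes part (ii) for general $\psi_k\in\mathcal{G}_{op}(J)$, and a complete account would require either restricting the statement to $\psi_k={\rm id}$ or supplying the missing global argument.
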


	\begin{proof}
		Let  $g_1, g_2 \in \mathcal{G}_{op}(J;\delta)$ be solutions of \eqref{prod} on $J$ such that $g_1\trianglelefteq g_2$, and  suppose that $g_1\ne g_2$ on $J$. Then there exists $x\in J$ such that $g_1(x)< g_2(x)$, and by result {\bf (iv)} of Lemma \ref{Lm1}, we have $g_1^k\trianglelefteq g_2^k$, implying that $g_1^k(x)\le  g_2^k(x)$ for $2\le k\le n$. Therefore, by \eqref{prod}, we have
		\begin{eqnarray}\label{contr}
			G(x)=\prod_{k=1}^{n}\Psi_k(g_1^k(\psi_k(x)))<\prod_{k=1}^{n}\Psi_k(g_2^k(\psi_k(x)))=G(x),
		\end{eqnarray}
	which is a contradiction. Hence $g_1=g_2$ on $J$, proving result {\bf (i)}.

		In order to prove result {\bf (ii)},
		consider any solutions  $g_1, g_2 \in \mathcal{G}_{op}(J;\delta)$ of \eqref{prod} such that $g_1\circ g_2=g_2\circ g_1$, and
		suppose that $g_1\ne g_2$ on $J$. Then there exists $x\in J$ such that $g_1(x)\ne g_2(x)$, implying that either $g_1(x)< g_2(x)$ or $g_2(x)< g_1(x)$. If $g_1(x)< g_2(x)$, then by result {\bf (v)} of Lemma \ref{Lm1} we have $g_1^k(x)\le g_2^k(x)$ for $2\le k \le n$, and therefore  we arrive at \eqref{contr},
		 which is a contradiction. We get a similar contradiction if $g_2(x)< g_1(x)$. Hence $g_1=g_2$ on $J$.
	\end{proof}

	As seen in \cite{GZ2021}, the condition $g_1\trianglelefteq g_2$ and the condition $g_1\circ g_2=g_2\circ g_1$,
	assumed in results {\bf (i)} and {\bf (ii)} of Theorem \ref{Thm3} respectively, are independent.
	This shows
	that neither {\bf (i)} implies {\bf (ii)} nor {\bf (ii)} implies {\bf (i)} in Theorem \ref{Thm3}.

	
A map $g:X\to X'$, where $(X, \preceq)$ and $(X', \preceq')$ are lattices, is said to be {\it order-reversing} if $g(y)\preceq' g(x)$  in $X'$ whenever  $x\preceq y$ in $X$.
We
	remark that the current approach with the map $T$ defined in \eqref{T}, employed in Theorem \ref{Thm1},
	cannot be used to solve \eqref{prod} if
	$G\in \mathcal{G}_{or}(J;\delta)$, the complete lattice of all  order-reversing self-maps $g$ of $J$ with $g(x)\ge \delta$ for all $x\in J$ in the partial order $\trianglelefteq$.
	In fact,
	in the case that
	$G\in \mathcal{G}_{or}(J;\delta)$,
	assuming that  $\lambda:=\sum_{k=1}^{n}\lambda_k\ne0$,
	we see that $Tg$ is not necessarily order-preserving on $J$ for $g\in \mathcal{G}_{op}(J)$ no matter what
	the maps $\Xi_k, \psi_k$ and constants $\lambda_k$ are,
	because
	the function $x\mapsto (H(x))^\alpha$ in the product defining
	$Tg$
	is not order-preserving. Additionally, for a similar reason, the current approach with $T$ order-preserving cannot be used in general to seek a solution $g$ of \eqref{prod} in $\mathcal{G}_{or}(J;\delta)$ no matter whether $G$ is in $\mathcal{G}_{op}(J;\delta)$ or $\mathcal{G}_{or}(J;\delta)$.
	We do not consider the case that $\lambda=0$, where $G$ is not involved in $T$.

	Besides, a similar approach with $T$ order-reversing cannot be used to seek a solution $g$ of \eqref{prod} in $\mathcal{G}_{op}(J;\delta)$ or $\mathcal{G}_{or}(J;\delta)$
	no matter whether
	$\psi_k$ is in $\mathcal{G}_{op}(J)$ or $\mathcal{G}_{or}(J)$ (the complete lattice of all order-reversing self-maps of $J$ in the partial order $\trianglelefteq$),
 $\Xi_k$ is in $\mathcal{G}_{op}([\delta,d])$ or $ 	\mathcal{G}_{or}([\delta,d])$ (the complete lattice of all order-reversing self-maps of $[\delta,d]$ in the partial order $\trianglelefteq$), and	$G$ is in $\mathcal{G}_{op}(J;\delta)$ or $\mathcal{G}_{or}(J;\delta)$.
	In fact, Lemma \ref{L0} is not true if `order-preserving' is replaced with `order-reversing', as seen from the following example:
	Let $X$ be the complete lattice $\{x_1,x_2,x_3,x_4\}$ in the partial order $\preceq$ such that
	$x_1\preceq x_2\preceq x_4$ and $x_1\preceq x_3\preceq x_4$,
	and $g:X\to X$ be the order-reversing map such that
	$g(x_1)=x_4$, $g(x_2)=x_3$, $g(x_3)=x_2$ and $g(x_4)=x_1$. Then $g$ has no fixed points in $X$.
	For a similar reason, the approach of Theorem \ref{Thm2}  cannot be employed for other types of monotonicity.
	
	%
	%

\section{Semi-continuous solutions}
\label{S4}

The above section is
 devoted to order-preserving solutions, where considered nothing about continuity.
We now additionally consider semi-continuity
and give results on the existence and uniqueness of order-preserving semi-continuous solutions of \eqref{prod} on $J$.
As defined in \cite{Engelking1989},
 a map $g:J \to \mathbb{R}$ is said to be
{\it USC}, abbreviation of {\it upper semi-continuous},
(resp. {\it LSC}, abbreviation of {\it lower semi-continuous},)
at $x_0 \in J$ if
for every $\rho \in \mathbb{R}$ satisfying $g(x_0)<\rho$ (resp. $g(x_0)>\rho$) there exists a neighbourhood  $U$ of $x_0$ in $J$
such that $g(y)<\rho$ (resp. $g(y)>\rho$) for all $y\in U$. Equivalently, $g$ is USC (resp. LSC) at $x_0$ if
$\limsup_{x \to x_0} g(x) \le g(x_0)$ (resp. $\limsup_{x \to x_0} g(x) \ge g(x_0)$).  $g$ is said to be USC (resp. LSC) on $J$ if $g$ is USC (resp. LSC) at  each point of $J$.
Let
\begin{align*}
		\mathcal{G}^{usc}(J,\mathbb{R}):=&\{g\in \mathcal{G}(J,\mathbb{R}): g~\text{is USC on}~J\},\\
			\mathcal{G}^{lsc}(J,\mathbb{R}):=&\{g\in \mathcal{G}(J,\mathbb{R}): g~\text{is LSC on}~J\},\\
		\mathcal{G}^{usc}(J,\mathbb{R}_+):=&\{g\in \mathcal{G}(J,\mathbb{R}_+): g~\text{is USC on}~J\},\\
			\mathcal{G}^{lsc}(J,\mathbb{R}_+):=&\{g\in \mathcal{G}(J,\mathbb{R}_+): g~\text{is LSC on}~J\},\\
	\mathcal{G}^{usc}_{op}(J):=&\{g\in \mathcal{G}_{op}(J): g~\text{is USC on}~J\},\\
	\mathcal{G}^{lsc}_{op}(J):=&\{g\in \mathcal{G}_{op}(J): g~\text{is LSC on}~J\},\\
	\mathcal{G}^{sc}(J,\mathbb{R}_+)&:=\mathcal{G}^{usc}(J,\mathbb{R}_+)\cup \mathcal{G}^{lsc}(J,\mathbb{R}_+),\\
	\mathcal{G}^{sc}_{op}(J)&:=\mathcal{G}^{usc}_{op}(J)\cup \mathcal{G}^{lsc}_{op}(J),
\end{align*}
 and for each $\delta >0$ such that $c\le \delta\le d$, let
 \begin{align*}
 	\mathcal{G}_{op}^{usc}(J;\delta):=&\{g\in \mathcal{G}_{op}^{usc}(J):g(x)\ge \delta~\text{for all}~x\in J\},\\
 	\mathcal{G}_{op}^{lsc}(J;\delta):=&\{g\in \mathcal{G}_{op}^{lsc}(J):g(x)\ge \delta~\text{for all}~x\in J\},\\
 	\mathcal{G}^{sc}_{op}(J;\delta)&:=\mathcal{G}^{usc}_{op}(J;\delta)\cup \mathcal{G}^{lsc}_{op}(J;\delta).
 \end{align*}
By Theorem \ref{Thm1}, equation \eqref{prod} has a solution $g$ in $\mathcal{G}_{op}(J;\delta)$ for each $G\in  \mathcal{G}^{sc}_{op}(J;\delta)$
satisfying $G(c)\ge \delta^\lambda$ and $ G(d)\le d^\lambda$;
however we cannot conclude that $g$ is semi-continuous because $\mathcal{G}^{sc}_{op}(J;\delta)\subsetneq \mathcal{G}_{op}(J;\delta)$. For semi-continuous solutions
we have the following.

\begin{thm}\label{Thm2}
	Let $\delta>0$ such that $c\le\delta\le d$, and let $\lambda>0$, $\lambda_1\le 1$, $\lambda_k\le 0$ for $2\le k\le n$ such that $\sum_{k=1}^{n}\lambda_k=\lambda$.  Further, let
	 $\psi_1= {\rm id}$ on $J$ and $\psi_k\in \mathcal{G}^{usc}_{op}(J)$ for $2\le k\le n$, and let
	$\Psi_1 \in \mathcal{G}^{usc}(J, \mathbb{R})$ with $\Psi_1(\cdot)=({\rm id}(\cdot))^{\lambda_1}$ on $[\delta,d]$,  and
	$\Psi_k \in \mathcal{G}^{usc}(J, \mathbb{R}_+)$ with $\Psi_k(\cdot)=(\Xi_k(\cdot))^{\lambda_k}$ for some $\Xi_k\in \mathcal{G}^{usc}_{op}(J;\delta)$ for $2\le k\le n$.
Then the set $\mathcal{S}_{op}^{usc}(J;\delta)$ of all solutions of equation \eqref{prod} in $\mathcal{G}_{op}^{usc}(J;\delta)$
	is a non-empty complete sublattice of $\mathcal{G}_{op}^{usc}(J;\delta)$ 	if  $G\in \mathcal{G}_{op}^{usc}(J;\delta)$ satisfies $G(c) \ge \delta^\lambda$ and $G(d)\le d^\lambda$.
Moreover,
	\eqref{prod} has the minimum solution $g_*$ and the maximum solution $g^*$ in $\mathcal{G}_{op}^{usc}(J;\delta)$  given by
\begin{eqnarray*}
	&&g_*=\inf\left\{g\in \mathcal{G}_{op}^{usc}(J;\delta):   G\trianglelefteq  	\prod_{k=1}^{n}\Psi_k\circ g^k\circ \psi_k\right\},
	\\
	&&g^*=\sup\left\{g\in \mathcal{G}_{op}^{usc}(J;\delta): \prod_{k=1}^{n}\Psi_k\circ g^k\circ \psi_k \trianglelefteq  G\right\}.
\end{eqnarray*}
Moreover, these results are also true when upper semi-continuity
is replaced with lower semi-continuity.
\end{thm}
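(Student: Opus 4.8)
The plan is to run the same fixed-point argument used in the proof of Theorem \ref{Thm1}, but inside the smaller lattice $\mathcal{G}_{op}^{usc}(J;\delta)$ in place of $\mathcal{G}_{op}(J;\delta)$. First I would fix $H\in\mathcal{G}_{op}^{usc}(J;\delta)$, set $G(x)=(H(x))^{\lambda}$, and define the operator $T$ exactly as in \eqref{T} with $\alpha=\lambda$, $\alpha_1=1-\lambda_1$ and $\alpha_k=-\lambda_k$. The equivalence established in Step 1 of Theorem \ref{Thm1}, namely that $g$ solves \eqref{1} if and only if $Tg=g$, is purely algebraic and uses none of the regularity of $g$, so it transfers verbatim; likewise the estimate $c\le\delta\le Tg(x)\le d$ and the order-preservation both of $Tg$ and of $T$ itself go through unchanged. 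Hence the only genuinely new points are \textbf{(a)} that $T$ maps USC functions to USC functions, and \textbf{(b)} that $\mathcal{G}_{op}^{usc}(J;\delta)$ is a complete lattice.

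For \textbf{(a)} I would isolate three stability properties of upper semi-continuity. The crucial one is that the composition $f\circ g$ of two USC order-preserving maps is again USC: order-preservation forces the inner map $g$ to be right-continuous at each point, which controls the right approach to a point $x_0$, while for the left approach monotonicity of $f$ and $g$ gives $f(g(y))\le f(g(x_0))$, so semi-continuity survives. Iterating this, and using Lemma \ref{Lm1}(iii) for order-preservation, yields $g^{k}\in\mathcal{G}_{op}^{usc}(J)$ and hence $\Xi_k\circ g^{k}\circ\psi_k\in\mathcal{G}_{op}^{usc}(J)$ for each $k$. The other two properties are routine: composing a USC map with the continuous increasing power $t\mapsto t^{\alpha_k}$ (valid since each $\alpha_k\ge0$) preserves USC, and a finite product of non-negative USC maps is USC, because $\limsup a_nb_n\le(\limsup a_n)(\limsup b_n)$ for non-negative sequences. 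Applying these to the three factors $(g(x))^{\alpha_1}$, $\prod_{k\ge2}(\Xi_k(g^k(\psi_k(x))))^{\alpha_k}$ and $(H(x))^{\alpha}$ shows that $Tg$ is USC, so $T$ is a self-map of $\mathcal{G}_{op}^{usc}(J;\delta)$.

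For \textbf{(b)} I would use that the pointwise infimum of any family of USC maps is USC, since $\{\inf_i g_i<\rho\}=\bigcup_i\{g_i<\rho\}$ is open, together with the obvious fact that a pointwise infimum of order-preserving maps bounded below by $\delta$ is again order-preserving and $\ge\delta$. Thus $\phi(x)=\inf\{g(x):g\in\mathcal{E}\}$ (and the constant $d$ when $\mathcal{E}=\emptyset$) is the infimum of $\mathcal{E}$ in $\mathcal{G}_{op}^{usc}(J;\delta)$, and by Lemma $14$ of \cite{Gratzer1978} this makes $\mathcal{G}_{op}^{usc}(J;\delta)$ a complete lattice. With \textbf{(a)} and \textbf{(b)} in hand, $T$ is an order-preserving self-map of a complete lattice, so Lemma \ref{L0} gives that its fixed-point set, equivalently by Step 1 the solution set $\mathcal{S}_{op}^{usc}(J;\delta)$, is a non-empty complete sublattice with $g_*=\inf\{g:Tg\trianglelefteq g\}$ and $g^*=\sup\{g:g\trianglelefteq Tg\}$; rewriting $Tg\trianglelefteq g$ as $G\trianglelefteq\prod_{k}\Psi_k\circ g^k\circ\psi_k$ and dually produces the stated formulas. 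For a general admissible $G$ I would finally take $H(x)=(G(x))^{1/\lambda}$, which lies in $\mathcal{G}_{op}^{usc}(J;\delta)$ since $t\mapsto t^{1/\lambda}$ is continuous increasing and \eqref{G} holds, and invoke the above.

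I expect the main obstacle to be property \textbf{(a)}, and within it the composition step: semi-continuity is \emph{not} preserved under composition in general, so the argument must genuinely exploit monotonicity (for order-preserving maps, USC is equivalent to right-continuity), treating the left and right approaches to a point separately. The lower semi-continuous case is entirely dual: LSC order-preserving maps are left-continuous, finite products of non-negative LSC maps are LSC, and the pointwise \emph{supremum} of LSC maps is LSC, so one builds the complete lattice $\mathcal{G}_{op}^{lsc}(J;\delta)$ from suprema and repeats the argument.
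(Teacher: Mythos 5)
Your proposal is correct and follows essentially the same route as the paper: the same operator $T$ from \eqref{T}, the same verification that $\mathcal{G}_{op}^{usc}(J;\delta)$ is a complete lattice via pointwise infima (constant $d$ for the empty family) and Lemma 14 of \cite{Gratzer1978}, and the same application of the Knaster--Tarski theorem (Lemma \ref{L0}), with the general $G$ handled through $H=G^{1/\lambda}$. You in fact supply more detail than the paper at its one delicate point --- the paper merely asserts that $\Xi_k\circ g^k\circ\psi_k$ is USC, whereas you justify it via the equivalence of upper semi-continuity and right-continuity for order-preserving maps --- and your dual treatment of the LSC case (suprema in place of infima) matches what the paper leaves implicit.
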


\begin{proof}
		Let $G\in \mathcal{G}^{usc}_{op}(J;\delta)$ be arbitrary.
		
			\noindent {\it Step 1.} Construct an order-preserving map $T:\mathcal{G}^{usc}_{op}(J;\delta) \to \mathcal{G}^{usc}_{op}(J;\delta)$.

		Given $\lambda$ and $\lambda_k$'s as above, let $H(x):=(G(x))^{1/\lambda}$ for all $x\in J$, and define a map $T$ on  $\mathcal{G}^{usc}_{op}(J;\delta)$ as in \eqref{T},
		where $\alpha$ and $\alpha_k$'s are chosen as in Step 1 of Theorem \ref{Thm1}. 	Then, by using the assumptions on $\lambda$ and $\lambda_k$'s, we see that  $\alpha$ and $\alpha_k$'s satisfy \eqref{alphak ineuality}. Also, since $\lambda>0$, clearly $H$ is an order-preserving USC map on $J$. Further, since $G$ is an order-preserving self-map of  $J$, we see that \eqref{G} is satisfied. 	
		Therefore $H\in \mathcal{G}^{usc}_{op}(J;\delta)$.
		
			Consider an arbitrary $g\in \mathcal{G}^{usc}_{op}(J;\delta)$.
			Since $\Xi_k,  \psi_k$ are order-preserving USC self-maps on $J$, so is $\Psi_k\circ g^k\circ \psi_k$ for $2\le k \le n$. Therefore $Tg$ is USC on $J$, being the product of non-negative USC maps $x\mapsto (g(x))^{\alpha_1}$, $x\mapsto (\Xi_k\circ g^k\circ \psi_k(x))^{\alpha_k}$ for $2\le k\le n$ and $x\mapsto (H(x))^\alpha$.
			Also, since $\mathcal{G}^{usc}_{op}(J;\delta)\subseteq \mathcal{G}_{op}(J;\delta)$, by using Step 1
of the proof of Theorem \ref{Thm1}, we have $Tg \in \mathcal{G}_{op}(J;\delta)$ and $T$ is order-preserving.
		 Therefore $T$ is an order-preserving self-map on $\mathcal{G}^{usc}_{op}(J;\delta)$.

				
\noindent
{\it Step 2.} Prove that  $(\mathcal{G}^{usc}_{op}(J;\delta), \trianglelefteq)$ is a complete lattice.
		
		Consider an arbitrary subset $\mathcal{E}$ of $\mathcal{G}^{usc}_{op}(J;\delta)$. If $\mathcal{E}=\emptyset$, then the constant map $\phi:J\to J$ defined by $\phi(x)=d$
is the infimum of $\mathcal{E}$ in $\mathcal{G}^{usc}_{op}(J;\delta)$. If $\mathcal{E}\ne \emptyset$, then the map $\phi:J\to J$ defined by
$\phi(x)=\inf\{g(x): g \in \mathcal{E}\}$ is the infimum
of $\mathcal{E}$ in $\mathcal{G}^{usc}_{op}(J;\delta)$.  Thus every subset of $\mathcal{G}^{usc}_{op}(J;\delta)$ has the infimum in $\mathcal{G}^{usc}_{op}(J;\delta)$. Therefore by Lemma $14$ of \cite{Gratzer1978},
which says that if every subset of a poset $P$ has the infimum in $P$ then $P$ is complete,
we know that $\mathcal{G}^{usc}_{op}(J;\delta)$ is a complete lattice.


\noindent
{\it Step 3.} Prove that	
$\mathcal{S}^{usc}_{op}(J;\delta)$ is a non-empty complete sublattice of $\mathcal{G}^{usc}_{op}(J;\delta)$.

From Steps 1 and 2, we see that $T$ is an order-preserving self-map of the complete lattice  $\mathcal{G}^{usc}_{op}(J;\delta)$. Hence, by Lemma \ref{L0}, the set of all fixed points of $T$ in $\mathcal{G}^{usc}_{op}(J;\delta)$ is a non-empty complete sublattice of $\mathcal{G}^{usc}_{op}(J;\delta)$. This implies by Step 1 of Theorem \ref{Thm1} that the set of all solutions of  \eqref{1}, and hence that of \eqref{prod} in $\mathcal{G}^{usc}_{op}(J;\delta)$ is a non-empty complete sublattice of $\mathcal{G}^{usc}_{op}(J;\delta)$,  because $H=G^{1/\lambda}$.
That is, $\mathcal{S}^{usc}_{op}(J;\delta)$
is a non-empty complete sublattice of $\mathcal{G}^{usc}_{op}(J;\delta)$.

In particular, \eqref{prod} has the minimum solution $g_*$ and the maximum solution $g^*$ in $\mathcal{G}^{usc}_{op}(J;\delta)$, which are in fact $\min \mathcal{S}^{usc}_{op}(J;\delta)$ and $\max \mathcal{S}^{usc}_{op}(J;\delta)$, respectively.
		 Further, by Lemma \ref{L0}, we have $g_*=\inf\{g\in \mathcal{G}_{op}^{usc}(J;\delta):  Tg \trianglelefteq  g\}$ and $g^*=\sup\{g\in \mathcal{G}_{op}^{usc}(J;\delta): g\trianglelefteq  Tg\}$.
		This completes the proof.
\end{proof}

We have the following results on uniqueness of solutions.

\begin{coro}\label{Thm5}
		Let $\delta>0$ such that $c\le\delta\le d$. Further, let  $\psi_1=id$ on $J$ and  $\psi_k \in \mathcal{G}^{usc}_{op}(J)$ for $2\le k\le n$, and let $\Psi_1 \in \mathcal{G}^{usc}(J, \mathbb{R}_+)$  is strictly order-preserving on $[\delta,d]$,  and $\Psi_k \in \mathcal{G}^{usc}(J,\mathbb{R}_+)$   is  order-preserving on $[\delta,d]$ for $2\le k \le n$. Then 	the following assertions are true for $G \in \mathcal{G}^{usc}_{op}(J;\delta)$.
\begin{description}
	\item[(i)] If $g_1, g_2 \in \mathcal{G}_{op}^{usc}(J;\delta)$ are solutions of \eqref{prod} on $J$ such that 	$g_1\trianglelefteq g_2$, then $g_1=g_2$.

	\item[(ii)] If $g_1, g_2 \in \mathcal{G}_{op}^{usc}(J;\delta)$ are solutions of \eqref{prod} on $J$ such that $g_1\circ g_2=g_2\circ g_1$, then $g_1=g_2$.
\end{description}
Further, these results are also true when upper semi-continuity
is replaced with lower semi-continuity.
\end{coro}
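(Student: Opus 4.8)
The plan is to observe that Corollary \ref{Thm5} is simply the restriction of Theorem \ref{Thm3} to the semi-continuous subclasses, so it should follow by a direct inclusion argument rather than a fresh fixed-point computation. Concretely, I would first record the elementary set inclusions $\mathcal{G}^{usc}(J,\mathbb{R}_+)\subseteq \mathcal{G}(J,\mathbb{R}_+)$, $\mathcal{G}^{usc}_{op}(J)\subseteq \mathcal{G}_{op}(J)$, and $\mathcal{G}^{usc}_{op}(J;\delta)\subseteq \mathcal{G}_{op}(J;\delta)$, all of which hold immediately from the definitions, since the semi-continuous classes are obtained from the corresponding order-preserving ones by adjoining the extra USC requirement.

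Next I would check that, under these inclusions, every hypothesis of Theorem \ref{Thm3} is implied by the corresponding hypothesis of Corollary \ref{Thm5}. Indeed $\delta$ satisfies the same constraint $c\le\delta\le d$; the assumption $\psi_1=\mathrm{id}$ is unchanged, and $\psi_k\in\mathcal{G}^{usc}_{op}(J)$ gives $\psi_k\in\mathcal{G}_{op}(J)$ for $2\le k\le n$; the map $\Psi_1\in\mathcal{G}^{usc}(J,\mathbb{R}_+)$, being strictly order-preserving on $[\delta,d]$, lies in $\mathcal{G}(J,\mathbb{R}_+)$ and is still strictly order-preserving there; each $\Psi_k\in\mathcal{G}^{usc}(J,\mathbb{R}_+)$ order-preserving on $[\delta,d]$ lies in $\mathcal{G}(J,\mathbb{R}_+)$ and remains order-preserving on $[\delta,d]$; and finally $G\in\mathcal{G}^{usc}_{op}(J;\delta)$ yields $G\in\mathcal{G}_{op}(J;\delta)$. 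Thus the full list of hypotheses of Theorem \ref{Thm3} is in force.

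With this preparation the two assertions are immediate. Given solutions $g_1,g_2\in\mathcal{G}^{usc}_{op}(J;\delta)$ of \eqref{prod}, the inclusion $\mathcal{G}^{usc}_{op}(J;\delta)\subseteq\mathcal{G}_{op}(J;\delta)$ places them among the order-preserving solutions treated in Theorem \ref{Thm3}; hence the comparability condition $g_1\trianglelefteq g_2$ forces $g_1=g_2$ by part \textbf{(i)} of that theorem, and the commuting condition $g_1\circ g_2=g_2\circ g_1$ forces $g_1=g_2$ by part \textbf{(ii)}. The lower semi-continuous case is entirely symmetric: one replaces every occurrence of USC by LSC and uses the parallel inclusions $\mathcal{G}^{lsc}(J,\mathbb{R}_+)\subseteq\mathcal{G}(J,\mathbb{R}_+)$ and $\mathcal{G}^{lsc}_{op}(J;\delta)\subseteq\mathcal{G}_{op}(J;\delta)$, so that Theorem \ref{Thm3} again applies verbatim.

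I do not expect any genuine obstacle here. The substance of the uniqueness argument—built on the strict monotonicity of $\Psi_1$ on $[\delta,d]$ together with parts \textbf{(iv)} and \textbf{(v)} of Lemma \ref{Lm1}—was already carried out in the proof of Theorem \ref{Thm3}. The only new ingredient is that the ambient class is now smaller, which is harmless for a uniqueness-type statement: if any two comparable (resp. commuting) order-preserving solutions must coincide, then a fortiori any two such solutions that additionally happen to be semi-continuous must coincide. The sole point requiring a moment's care is to confirm that the semi-continuity imposed on $\Psi_1,\Psi_k,\psi_k$ and on $G$ is never used as a constraint in Theorem \ref{Thm3}, so that dropping it only enlarges the hypotheses' scope; this is clear from the inclusions above.
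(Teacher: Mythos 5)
Your proposal is correct and is essentially the paper's own proof: the paper likewise deduces the corollary from Theorem \ref{Thm3} via the inclusions $\mathcal{G}_{op}^{sc}(J;\delta)\subseteq\mathcal{G}_{op}(J;\delta)$, $\mathcal{G}^{sc}(J,\mathbb{R}_+)\subseteq\mathcal{G}(J,\mathbb{R}_+)$ and $\mathcal{G}^{sc}_{op}(J)\subseteq\mathcal{G}_{op}(J)$, exactly as you do. Your write-up merely spells out the hypothesis-by-hypothesis verification that the paper leaves implicit.
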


\begin{proof}
	Follows from Theorem \ref{Thm3}, since $\mathcal{G}_{op}^{sc}(J;\delta) \subseteq \mathcal{G}_{op}(J;\delta)$, $\mathcal{G}^{sc}(J, \mathbb{R}_+)\subseteq \mathcal{G}(J, \mathbb{R}_+)$ and  $\mathcal{G}^{sc}_{op}(J)\subseteq \mathcal{G}_{op}(J)$.
\end{proof}

	It is important to note that the current approach employed in Theorem \ref{Thm2}
cannot be used to solve \eqref{prod} for continuous solutions on $J$. Indeed, it is easy to see that $\mathcal{G}_{op}^c(J;\delta):=\{g\in \mathcal{G}_{op}(J;\delta):g~\text{is continuous on}~J\}$ is not a complete lattice in the partial order $\trianglelefteq$.

\section{Examples and remarks}\label{sec6}

The following examples illustrate our main theorems.

\begin{exmp}\label{Exmp1}
	{\rm		
Consider the functional equation
		\begin{eqnarray}\label{Ex3}
	(g(x))^\frac{4}{5}  \left(e^{(\log g^2(x))^2}\right)^\frac{1}{5}=\sqrt{x}\cdot  e^{\frac{(\log x)^2}{2}}
		\end{eqnarray}
		in the form \eqref{prod} on $J=[1,e]$, where $\lambda_1=4/5$, $\lambda_2=1/5$,  $G(x)=\sqrt{x}\cdot e^{(\log x)^2/2}$, $\psi_1(x)=\psi_2(x)=x$, $\Psi_1(x)=(\Xi_1(x))^{\lambda_1}$ and $\Psi_2(x)=(\Xi_2(x))^{\lambda_2}$ such that $\Xi_1(x)=x$ and $\Xi_2(x)=e^{(\log x)^2}$.
Let $I:=\log J$, and $f(x):=\log g(e^x)$, $F(x):=\log G(e^x)$, $\Upsilon_k(x):=\log \Xi_k(e^x)$, $\Phi_k(x):=\log \Psi_k(e^x)$ and $\phi_k(x):=\log\psi_k(e^x)$  for all $x\in I$ and $k=1,2$. Then \eqref{Ex3} reduces to the  equation
\begin{eqnarray*}
	\frac{4}{5}	f(x)+\frac{1}{5}(f^2(x))^2=\frac{x^2+x}{2},
\end{eqnarray*}
of the form \eqref{sum} on $I=[0,1]$, where $F(x)=(x^2+x)/2$, $\phi_1(x)=\phi_2(x)=x$, $\Phi_1(x)=\lambda_1\Upsilon_1(x)$ and $\Phi_2(x)=\lambda_2\Upsilon_2(x)$ such that $\Upsilon_1(x)=x$ and $\Upsilon_2(x)=x^2$  for all $x\in I$.
Note that  $F\in \mathcal{F}(I;1/2,3/2)\subseteq  \mathcal{F}(I;12/25,16/5) =\mathcal{F}(I;K_1\delta, K_0M)$, where $\delta=1/5$, $M=4$,  $K_0=\lambda_1l_1+\lambda_2l_2\delta=4/5$ and $K_1=\lambda_1L_1+\lambda_2L_2M=12/5$ with $l_1=1$, $l_2=0$, $L_1=1$ and $L_2=2$.
 This implies that $G\in \mathcal{G}(J;K_1\delta, K_0M)$.  Similarly, we have $\Upsilon_1\in \mathcal{F}(I;1,1)=\mathcal{F}(I;l_1,L_1)$ and $\Upsilon_2\in \mathcal{F}(I;0,2)=\mathcal{F}(I,l_2,L_2)$, implying that $\Xi_k\in \mathcal{G}(J;l_k,L_k)$ for $k=1,2$.
 Further, $K=\lambda_1l_1-(\lambda_2L_2-\lambda_2l_2\delta)=2/5>0$, implying (\ref{KKK}).
Thus, all the hypotheses of Theorems \ref{C-existance} are satisfied. Hence \eqref{Ex3} has a unique solution $g$ in $\mathcal{G}(J;1/5,4)$ that depends continuously on $G$.}
\end{exmp}

\begin{exmp}\label{E1}
{\rm	Consider the functional equation
	\begin{eqnarray}\label{Ex1}
	(g(x))^{\frac{4}{5}} \left(\frac{(g^2(x^3))^4+1}{3}\right)^{-\frac{3}{10}}
=\frac{x^2+1}{2}
	\end{eqnarray}
on $[0,1]$,	which is equation \eqref{prod} with $\lambda_1=4/5$, $\lambda_2=-3/10$, $G(x)=(x^2+1)/2$,  $\psi_1(x)=x$,   $\psi_2(x)=x^3$, $\Psi_1(x)=x^{\lambda_1}$ and $\Psi_2(x)=(\Xi_2(x))^{\lambda_2}$  such that   $\Xi_2(x)=(x^4+1)/3$.
 Clearly, $\lambda_1<1$,  $\lambda_2<0$, $\lambda=\lambda_1+\lambda_2=1/2>0$, $(G(0))^{1/\lambda}=1/4>1/5=:\delta$ and $(F(1))^{1/\lambda}=1$.
Also, it is easy to see that $\Psi_1\in \mathcal{G}^{usc}([0,1],\mathbb{R})$, $\Xi_2, G\in \mathcal{G}^{usc}_{op}([0,1],\delta)$, $\Psi_2\in \mathcal{G}^{usc}([0,1],\mathbb{R}_+)$ and $\psi_1, \psi_2 \in \mathcal{G}_{op}^{usc}([0,1])$.
 Thus, all the
hypotheses of Theorem \ref{Thm2} are satisfied. Hence \eqref{Ex1} has a solution in $\mathcal{G}_{op}^{usc}([0,1];\delta)$.
}
\end{exmp}

As defined in \cite{royden1988}, a map $g:J\to \mathbb{R}$ is called {\it Lebesgue measurable} (or simply {\it measurable})
if $\{x\in J: g(x)<\rho\}$ is Lebesgue measurable for each $\rho\in \mathbb{R}$.
A measurable function $g:J\to \mathbb{R}$ is said to be $L^p$ integrable (or simply $L^p$),
where $1\le p<\infty$,
if $|g|^p$ is Lebesgue integrable, i.e.,  $\int_{c}^{d}|g|^p d\mu<\infty$.
Although the  discussion in Section \ref{S2} is devoted to order-preserving solutions,
these results on order-preserving solutions also provide those for $L^p$ solutions on $J$ for $1\le p<\infty$, which have a weaker regularity,
since every order-preserving map on a compact interval is measurable and every bounded measurable map on a measurable set of finite measure is integrable by Proposition 3 of \cite[p.79]{royden1988}.
Further, it is worth noting that, in the special case $p=1$, these results indeed provide sufficient conditions for the existence and uniqueness of Riemann integrable solutions of \eqref{prod} for Riemann integrable maps $G$, because every Riemann integrable map is $L^1$ integrable and conversely, whenever the map is order-preserving.

\begin{exmp}
{\rm		The functional equation
	\begin{eqnarray}\label{Ex2}
	(g(x))^{\frac{3}{5}}\left(\frac{(g^2(\sin(\frac{\pi x}{2})))^4+2}{7}\right)^{-\frac{1}{10}}
	=G(x)
	\end{eqnarray}
	on $[0,1]$ with the function
\begin{eqnarray*}
	G(x)=\left\{\begin{array}{cll}
		\frac{x^4+1}{3}&\text{if}&0\le x<\frac{1}{2},\\
		\frac{x^3+1}{2}&\text{if}&\frac{1}{2}\le x\le 1
	\end{array}\right.
\end{eqnarray*}
 is of the form \eqref{prod}, where  $\lambda_1=3/5$, $\lambda_2=-1/10$,  $\psi_1(x)=x$,   $\psi_2(x)=\sin(\pi x/2)$, $\Psi_1(x)=x^{\lambda_1}$ and $\Psi_2(x)=(\Xi_2(x))^{\lambda_2}$ such that $\Xi_2(x)=(x^4+2)/7$. Then $\lambda_1<1$,   $\lambda_2<0$,  $\lambda=\lambda_1+\lambda_2=1/2>0$, $(G(0))^{1/\lambda}=1/9>1/10=:\delta$ and $(G(1))^{1/\lambda}=1$. Further, we have $\Psi_1\in \mathcal{G}([0,1],\mathbb{R})$, $\Xi_2, G\in \mathcal{G}_{op}([0,1],\delta)$, $\Psi_2\in \mathcal{G}([0,1],\mathbb{R})$ and $\psi_1, \psi_2 \in \mathcal{G}_{op}([0,1])$.
	Thus, all the
	hypotheses of Theorem \ref{Thm1} are satisfied. Hence \eqref{Ex2} has a solution in $\mathcal{G}_{op}([0,1];\delta)$. Although $G$ is not continuous on $[0,1]$, we see from our discussion in the preceding paragraph
	  that $G$ is $L^p$ integrable on $[0,1]$ for $1\le p<\infty$. For a similar reason, $g$ is also  $L^p$ integrable on $[0,1]$ for $1\le p<\infty$.}
\end{exmp}

Similarly to upper semi-continuity considered in Corollary \ref{Thm5}, we can also consider another property $P$,
which can be, for instance, continuity, differentiability or measurability. More precisely, Corollary \ref{Thm5} is indeed true whenever $\mathcal{G}^{usc}_{op}(J;\delta)$, $\mathcal{G}^{usc}(J, \mathbb{R}_+)$  and $\mathcal{G}^{usc}_{op}(J)$ are replaced by $\mathcal{G}^{P}_{op}(J;\delta)$, $\mathcal{G}^{P}(J, \mathbb{R}_+)$  and $\mathcal{G}^{P}_{op}(J)$, respectively,
since
 $\mathcal{G}_{op}^{P}(J;\delta) \subseteq \mathcal{G}_{op}(J;\delta)$, $\mathcal{G}^{P}(J, \mathbb{R}_+)\subseteq \mathcal{G}(J, \mathbb{R}_+)$ and  $\mathcal{G}^{P}_{op}(J)\subseteq \mathcal{G}_{op}(J)$,
 where
 \begin{align*}
 	\mathcal{G}^{P}(J,\mathbb{R}_+)&:=\{g\in \mathcal{G}(J,\mathbb{R}_+): g~\text{satisfies property P on}~J\},\\
 	\mathcal{G}^{P}_{op}(J)&:=\{g\in \mathcal{G}_{op}(J): g~\text{satisfies property P on}~J\},\\
 	\mathcal{G}^{P}_{op}(J;\delta)&:=\{g\in \mathcal{G}_{op}(J;\delta): g~\text{satisfies property P on}~J\}
 \end{align*}
for each $\delta>0$ such that $\max\{0,c\}<\delta< d$.
It is worth noting that, because the hypotheses of Theorem \ref{C-existance} and Corollary \ref{Thm5} are different, the results obtained from an analogue of Corollary \ref{Thm5}  with $P$ being the property of continuity provide additional results on the uniqueness of solutions of \eqref{prod} on $J$ in addition to those given in Theorem \ref{Thm5}.
However, because $\mathcal{G}_{op}^P(J;\delta)$ is not a complete lattice for this $P$, as noted at the end of Section \ref{S4}, we cannot use the approach used to prove Theorem \ref{Thm2} to give results on the existence of continuous solutions of \eqref{prod} on $J$.


Furthermore, because it is assumed that $0 <\lambda_1 \le 1$ in all of our main results, we cannot use them to solve the iterative root problem $g^n=G$ on $J$.
Moreover, as remarked in Section \ref{S2},
our current approach in proving Theorems \ref{Thm1} and \ref{Thm2} is not applicable to order-reversing cases. We leave these problems open for further investigation.

Besides, as seen in Section \ref{sec-2}, by using Proposition \ref{p1} and Theorem \ref{C-existance}, we can indeed solve \eqref{prod} for continuous solutions  on $J$ whenever $c<d<0$ and $\lambda_k\in \mathbb{Z}$ for all $1\le k\le n$ such that $\sum_{k=1}^{n}\lambda_k$ is odd.
On the other hand, if $c<d<0$ and $\lambda_k\in \mathbb{R}\setminus \mathbb{Z}$ for some $1\le k \le n$,  then (using the notation of Theorem \ref{C-existance})  for any maps $g, G, \Xi_k, \psi_k\in \mathcal{C}(J,J)$ the map $x\mapsto (g(x))^{\lambda_1}\cdot \prod_{k=2}^{n}(\Xi_k(g^k(\psi_k(x))))^{\lambda_k}$ is generally a multi-valued complex map, whereas the map $x\mapsto G(x)$ is a single valued real map. So, to obtain the equality
\eqref{prod},
we have to choose branches of the complex logarithm suitably, which depends not only on
 $x$ and $(g(x))^{\lambda_1}$ but also on each term of the product $\prod_{k=2}^{n}(\Xi_k(g^k(\psi_k(x))))^{\lambda_k}$.
 Therefore, solving \eqref{prod} for continuous solutions on $J$ in this case is very difficult.
Furthermore, as indicated in the Zero problem at the end of Section \ref{sec-2},  the current approach of using	the logarithmic conjugacy, as used in  Theorem \ref{C-existance}, is not applicable for solving \eqref{prod} on $J$ whenever $0\in J$, because $\log 0$ is not a well-defined real number. We leave this case open for further investigation.

Additionally, we remind that throughout our discussion in Sections \ref{S2} and \ref{S4} we assumed that $\delta \in [c,d]$ satisfies that $\delta>0$. However, if  $\delta\le 0$, which assumption is possible only when $c\le 0$,  then the result in Step 1 of
Theorem \ref{Thm1} is not true in general, and therefore our current approach of Theorems \ref{Thm1} and \ref{Thm2}  cannot be employed for solving \eqref{prod} on $J$.
Moreover, we have also assumed throughout the discussion in Sections \ref{S2} and \ref{S4} that $d>0$. Indeed, if $d=0$, then
 the current approach of Theorems \ref{Thm1} and \ref{Thm2}
 cannot be used to solve
 \eqref{prod} on $J$ because in this case the result in Step 1 of Theorem \ref{Thm1} is not true in general.
On the other hand,
if $d<0$, then solving \eqref{prod} on $J$ in this case is very difficult for the same reason as mentioned in the preceding paragraph,  which involved the choice of logarithm branches.



\noindent {\bf Acknowledgment:}
The author Chaitanya Gopalakrishna is supported
by the INSPIRE Faculty Fellowship of the Department of
Science and Technology, India through DST/INSPIRE/04/2022/001760.
The author Weinian Zhang is supported by NSFC grants \# 11831012 and \# 12171336 and National Key R\&D Program of China 2022YFA1005900.


{\small

}

%
%
%
%
%
%
%


\end{document}